\documentclass{article}

\usepackage
{
    graphicx,
    amssymb,
    amsmath,
    amsthm,
    dsfont, 
    xcolor,
    epstopdf,
    braket,
    mathtools,
    algpseudocode,
    authblk
}

\usepackage[colorlinks,
            pdffitwindow=false,
            plainpages=false,
            pdfpagelabels=true,
            pdfpagemode=UseOutlines,
            pdfpagelayout=SinglePage,
            bookmarks=false,
            colorlinks=true,
            hyperfootnotes=false,
            linkcolor=blue,
            citecolor=green!50!black]{hyperref}

\usepackage[hmargin=2cm,vmargin=2.5cm]{geometry}
\usepackage[bf]{caption}

\DeclareMathAlphabet{\mathpzc}{OT1}{pzc}{m}{it}

\newcommand{\subfiguretitle}[1]{{\scriptsize{#1}} \\[1mm]}
\newcommand{\R}{\mathbb{R}}                                     
\newcommand{\C}{\mathbb{C}}                                     
\newcommand{\innerprod}[2]{\left\langle #1,\, #2 \right\rangle} 
\newcommand{\textsub}[1]{\text{\tiny{#1}}}                      
\providecommand{\norm}[1]{\left\lVert #1 \right\rVert}          
\providecommand{\grad}{\nabla}                                  

\newcommand\xqed[1]{\leavevmode\unskip\penalty9999 \hbox{}\nobreak\hfill \quad\hbox{#1}}
\newcommand{\exampleSymbol}{\xqed{$\triangle$}}

\DeclareMathOperator{\rank}{rank}
\DeclareMathOperator{\vect}{vec}
\DeclareMathOperator{\ind}{ind}

\newtheorem{theorem}{Theorem}[section]

\newtheorem{proposition}[theorem]{Proposition}

\theoremstyle{definition}
\newtheorem{example}[theorem]{Example}

\setcounter{MaxMatrixCols}{17}

\makeatletter
\renewcommand*\env@matrix[1][*\c@MaxMatrixCols c]{%
  \hskip -\arraycolsep
  \let\@ifnextchar\new@ifnextchar
  \array{#1}}
\makeatother


\title{Towards tensor-based methods for the numerical approximation of the Perron--Frobenius and Koopman operator}
\author[1]{Stefan Klus}
\author[1,2]{Christof Sch\"utte}
\affil[1]{Department of Mathematics and Computer Science, Freie Universit\"at Berlin, Germany}
\affil[2]{Zuse Institute Berlin, Germany}
\date{}

\begin{document}
\maketitle

\begin{abstract}
The global behavior of dynamical systems can be studied by analyzing the eigenvalues and corresponding eigenfunctions of linear operators associated with the system. Two important operators which are frequently used to gain insight into the system's behavior are the Perron--Frobenius operator and the Koopman operator. Due to the curse of dimensionality, computing the eigenfunctions of high-dimensional systems is in general infeasible. We will propose a tensor-based reformulation of two numerical methods for computing finite-dimensional approximations of the aforementioned infinite-dimensional operators, namely Ulam's method and Extended Dynamic Mode Decomposition (EDMD). The aim of the tensor formulation is to approximate the eigenfunctions by low-rank tensors, potentially resulting in a significant reduction of the time and memory required to solve the resulting eigenvalue problems, provided that such a low-rank tensor decomposition exists. Typically, not all variables of a high-dimensional dynamical system contribute equally to the system's behavior, often the dynamics can be decomposed into slow and fast processes, which is also reflected in the eigenfunctions. Thus, the weak coupling between different variables might be approximated by low-rank tensor cores. We will illustrate the efficiency of the tensor-based formulation of Ulam's method and EDMD using simple stochastic differential equations.
\end{abstract}

\section{Introduction}

The Perron--Frobenius operator and the Koopman operator enable the analysis of the global behavior of dynamical systems. Eigenfunctions of these operators can be used to extract the dominant dynamics, to detect almost invariant sets, or to decompose the system into fast and slow processes \cite{DJ99, DFJ00, PDHSM04, BMM12, WKR14, FGH14a, FGH14b}. Assume the state space of your system is $ \R^d $ and you want to discretize each direction using $ n $ grid points (or boxes if Ulam's method is used), then overall $ n^d $ values need to be stored. Even for $ d = 10 $ and $ n = 10 $, more than $ 70 $ gigabyte of storage space would be required, whereas typical systems might have hundreds or thousands of dimensions and naturally also require a more fine-grained discretization. This so-called \emph{curse of dimensionality} can be overcome by using tensor formats which compress the data and store only the information that is relevant for the reconstruction. In general, only an approximation of the original data can be retrieved. Approximating the objects under consideration by sums of low-rank tensor products has become a powerful approach for tackling high-dimensional problems~\cite{HRS12} and in many physically significant problems near-linear complexity can be achieved since the separation rank depends only weakly on the dimension~\cite{BM05}. For high-dimensional systems exhibiting multiscale behavior, it might be possible to represent the weak coupling between different variables by low-rank tensor cores. The leading eigenfunctions of the Koopman operator, for instance, are typically almost constant for the fast variables of the system and depend mainly on the slowly changing variables (see \cite{FGH14a}). Thus, using tensor-based algorithms could reduce the amount of time and memory required to compute and store eigenfunctions significantly. In this way, analyzing high-dimensional systems that could not be tackled using standard methods might become feasible.

Tensors, in our sense, are just multidimensional arrays as shown in Figure~\ref{fig:Tensors}. Here and in what follows, standard vectors will be denoted by lower-case letters, e.g.~$ v $, matrices by upper-case letters, e.g.~$ A $, and tensors by the corresponding bold symbols, e.g.~$ \mathbf{x} $. It is important to note that tensors are typically not explicitly given -- for example by observed data --, but only implicitly as solutions of systems of linear or nonlinear equations or eigenvalue problems~\cite{GKT13}. Thus, numerical methods that operate directly on tensor approximations need to be developed since the full tensors cannot be stored or handled anymore in practice.

\begin{figure}[htb]
    \centering
    \includegraphics[width=0.5\textwidth]{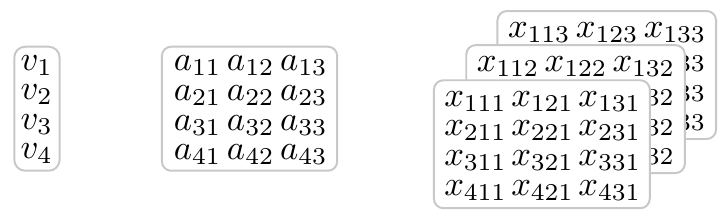}
    \caption{Tensors as multidimensional arrays. Here, $ v = (v_i) \in \R^4 $, $ A = (a_{ij}) \in \R^{4 \times 3} $, and $ \mathbf{x} = (x_{ijk}) \in \R^{4 \times 3 \times 3} $.}
    \label{fig:Tensors}
\end{figure}

Over the last years, low-rank tensor approximation approaches have become increasingly popular in the scientific computing community and are now becoming a standard tool to cope with large-scale problems that could not be handled before by standard numerical methods. An overview of different low-rank tensor approximation approaches can be found in \cite{GKT13}.

In this paper, we will show that the use of low-rank tensor approximation schemes potentially enables the computation of eigenfunctions of high-dimensional systems. The aim of this paper, however, is not to show that our approach is more efficient -- the tensor algorithms are mainly implemented in Matlab, a comparison with highly optimized numerical libraries implemented in C or C++ would not lead to meaningful results, developing high performance libraries for large-scale tensor problems is a separate task --, but to derive a tensor-based reformulation of existing methods and to show equivalency so that the theory available for the conventional matrix-vector based formulation can be carried over to multi-dimensional arrays. Furthermore, tensors could enable low-rank approximations of the Perron--Frobenius and Koopman operator as well as their eigenfunctions. One of the main future goals is to combine low-rank tensor decomposition techniques and the splitting of the dynamics into fast and slow processes. In \cite{FGH14a}, it has been shown that such a splitting of a multi-scale system exists and can be exploited to extract the slow dynamics. Another open problem is the generation of the low-rank approximations of the operators. Currently, the canonical tensor format representations are converted to the tensor-train format, which is time-consuming and in general leads to high ranks. Ideally, low-rank TT approximations should be directly generated from the given data.

We will start by introducing standard methods such as Ulam's method and the recently developed Extended Dynamic Mode Decomposition (EDMD) to approximate the eigenfunctions of the Perron--Frobenius operator and the Koopman operator in Section~\ref{sec:Perron--Frobenius and Koopman operator approximation}. Then, in Section~\ref{sec:Tensor formats}, different tensor formats will be described. In Section~\ref{sec:Tensor-based approximation}, we will reformulate Ulam's method and EDMD as tensor-based methods. Section~\ref{sec:Eigenvalue problems} contains a brief summary of simple power iteration schemes for the resulting tensor-based (generalized) eigenvalue problems. Simple examples which illustrate the proposed approaches are shown in Section~\ref{sec:Examples}. In Section~\ref{sec:Conclusion}, we will conclude with a short summary and possible future work.

\section{Perron--Frobenius and Koopman operator approximation}
\label{sec:Perron--Frobenius and Koopman operator approximation}

In this section, we will briefly introduce the Perron--Frobenius operator $ \mathcal{P} $ and the Koopman operator $ \mathcal{K} $ as well as numerical methods to compute finite-dimensional approximations, namely Ulam's method and EDMD. The main difference between Ulam's method and EDMD is that the former uses indicator functions\footnote{Higher-order methods for the approximation of the Perron--Frobenius operator have been proposed in \cite{DiDuLi}.} for a given box discretization of the domain while the latter allows arbitrary ansatz functions such as monomials, Hermite polynomials, trigonometric functions, or radial basis functions. Although EDMD was primarily developed for the approximation of the Koopman operator, it can be used to compute eigenfunctions of the Perron--Frobenius operator as well~\cite{KKS15}. Analogously, Ulam's method can also be used to compute eigenfunctions of the Koopman operator.

\subsection{Perron--Frobenius and Koopman operator}

Let $ S : \mathcal{X} \to \mathcal{X} $ be a dynamical system defined on a domain $ \mathcal{X} $, for example $ \mathcal{X} \subseteq \R^d $. Then the Perron--Frobenius operator or transfer operator $ \mathcal{P} $ is defined by
\begin{equation} \label{eq:PF operator}
    \int g \cdot \mathcal{P} f \, dm = \int (g \circ S) \cdot f \, dm,
\end{equation}
for all $ f, g \in \mathcal{F} $, where $ \mathcal{F} $ is an appropriately defined function space and $ \circ $ denotes function composition. We assume in what follows that $ \mathcal{F} = L^2(\mathcal{X}) $. The aim is to compute eigenfunctions of the Perron--Frobenius operator, given by
\begin{equation*}
    \mathcal{P} \varphi_i = \lambda_i \varphi_i.
\end{equation*}
The eigenfunction $ \varphi_1 $ corresponding to $ \lambda_1 = 1 $ is the invariant density of the system, i.e.~$ \mathcal{P} \varphi_1 = \varphi_1 $. The magnitude of the second largest eigenvalue $ \lambda_2 $ can be interpreted as the rate at which initial densities converge to the invariant density (for more details and assumptions about the dynamical system, see e.g.~\cite{FGH14a} and references therein). More generally, the leading eigenvalues of the Perron--Frobenius operator close to one correspond to the slowly converging transients of the system. The Koopman operator $ \mathcal{K} $, on the other hand, is defined by
\begin{equation*}
    \mathcal{K} f = f \circ S
\end{equation*}
and acts on functions $ f : \mathcal{X} \to \C $, $ f \in \mathcal{F} $. Correspondingly, the stochastic Koopman operator is defined by $ \mathcal{K} f = \mathbb{E}[\, f \circ S \,] $, where $ \mathbb{E}[\,\cdot\,] $ denotes the expected value with respect to the probability measure underlying $ S(x) $. We will only introduce the required notation and focus mainly on discrete-time dynamical systems, for more details on the Koopman operator and its properties, we refer to~\cite{BMM12, WKR14, WRK14}. While the Perron--Frobenius operator describes the evolution of \emph{densities}, the Koopman operator describes the evolution of \emph{observables}, which could be measurements or sensor probes~\cite{BMM12}. Instead of analyzing orbits $ \{x, \, S(x), \, S^2(x), \,  \dots \} $ of the dynamical system, we now analyze the measurements $ \{f(x), \, f(S(x)), \, f(S^2(x)), \, \dots \} $ at these points.

The Koopman operator $ \mathcal{K} $ is the adjoint of the Perron--Frobenius operator $ \mathcal{P} $ and thus an infinite-dimensional but linear operator. A finite-dimensional approximation (computed using generalized Galerkin methods) of this operator captures the dynamics of a nonlinear dynamical system without necessitating a linearization around a fixed point~\cite{BMM12, WKR14}. We are again interested in eigenfunctions of the operator, given by
\begin{equation*}
    \mathcal{K} \varphi_i = \lambda_i \varphi_i.
\end{equation*}
Let $ f : \mathcal{X} \to \R $ be an observable of the system that can be written as a linear combination of the linearly independent eigenfunctions $ \varphi_i $, i.e.
\begin{equation*}
    f(x) = \sum_i c_i \varphi_i(x),
\end{equation*}
with $ c_i \in \C $. Then
\begin{equation*}
    (\mathcal{K}f)(x) = \sum_i \lambda_i c_i \varphi_i(x).
\end{equation*}
Analogously, for vector-valued functions $ F = [f_1, \, \dots, \, f_n]^T $, we obtain
\begin{equation*}
    \mathcal{K} F =
    \begin{bmatrix}
        \sum_i \lambda_i c_{i, 1} \varphi_i \\
        \vdots \\
        \sum_i \lambda_i c_{i, n} \varphi_i
    \end{bmatrix} =
    \sum_i \lambda_i \varphi_i
    \begin{bmatrix}
        c_{i, 1} \\
        \vdots \\
        c_{i, n}
    \end{bmatrix} =
    \sum_i \lambda_i \varphi_i v_i,
\end{equation*}
where $ v_i = [c_{i, 1}, \, \dots, \, c_{i, n}]^T $. These vectors $ v_i $ corresponding to the eigenfunctions $ \varphi_i $ are called Koopman modes.

\subsection{Ulam's method}

A frequently used method to compute an approximation of the Perron--Frobenius operator is Ulam's method, see e.g.~\cite{DFJ00, CU02, BS13, FGH14a}. First, the state space $ \mathcal{X} $ is covered by a finite number of disjoint boxes $ \{ \mathcal{B}_1, \, \dots, \, \mathcal{B}_k \} $. Let $ \mathds{1}_{\mathcal{B}_i} $ be the indicator function for box $ \mathcal{B}_i $, i.e.
\begin{equation*}
    \mathds{1}_{\mathcal{B}_i}(x)
        = \begin{cases} 1,& \text{if } x \in \mathcal{B}_i, \\ 0,& \text{otherwise}. \end{cases}
\end{equation*}
Then a finite-dimensional approximation of the operator can be obtained as follows: Using definition \eqref{eq:PF operator} leads to
\begin{equation*}
    \int \mathds{1}_{\mathcal{B}_j} \cdot \mathcal{P} \mathds{1}_{\mathcal{B}_i} \, dm
        = \int (\mathds{1}_{\mathcal{B}_j} \circ S) \cdot \mathds{1}_{\mathcal{B}_i} \, dm
        = \int \mathds{1}_{S^{-1}(\mathcal{B}_j)} \cdot \mathds{1}_{\mathcal{B}_i} \, dm
        = m(S^{-1}(\mathcal{B}_j) \cap \mathcal{B}_i).
\end{equation*}
This relationship can be represented by a matrix $ \hat{P} = (\hat{p}_{ij}) \in \R^{k \times k} $ with
\begin{equation*}
    \hat{p}_{ij} = \frac{m\left(S^{-1}(\mathcal{B}_j) \cap \mathcal{B}_i\right)}{m(\mathcal{B}_i)}.
\end{equation*}
Here, in order to avoid confusion with the approximation $ \mathbf{P} $ on a tensor space introduced below, we denote the matrix representation $ \hat{P} $ instead of $ P $. The denominator $ m(\mathcal{B}_i) $ normalizes the entries $ \hat{p}_{ij} $ so that $ \hat{P} $ becomes a row-stochastic matrix and defines a finite Markov chain. The left eigenvector corresponding to the eigenvalue $ \lambda_1 = 1 $ approximates the invariant measure of the Perron--Frobenius operator $ \mathcal{P} $.

The entries $ \hat{p}_{ij} $ of the matrix $ \hat{P} $ represent the probabilities of points being mapped from box $ \mathcal{B}_i $ to box $ \mathcal{B}_j $ by the dynamical system $ S $. These entries can be estimated by randomly choosing a large number of test points $ x_i^{(l)} $, $ l = 1, \dots, n $, in each box $ \mathcal{B}_i $ and by counting how many times test points were mapped from box $ \mathcal{B}_i $ to box $ \mathcal{B}_j $ by $ S $, i.e.
\begin{equation} \label{eq:Ulam p_ij}
    \hat{p}_{ij} \approx \frac{1}{n} \sum_{l=1}^{n}
        \mathds{1}_{\mathcal{B}_j}\left(S\left(x_i^{(l)}\right)\right).
\end{equation}
The eigenfunctions of the Perron--Frobenius operator are then approximated by the left eigenvectors of the matrix $ \hat{P} $, the eigenfunctions of the Koopman operator by the right eigenvectors.

\subsection{Extended dynamic mode decomposition}

An approximation of the Koopman operator, the Koopman eigenvalues, eigenfunctions, and eigenmodes can be computed using EDMD. The method requires data, i.e.~a set of values $ x_i $ and the corresponding $ y_i = S(x_i) $ values, $ i = 1, \dots, m $, written in matrix form as
\begin{equation*}
    X =
    \begin{bmatrix}
        x_1 & x_2 & \cdots & x_m
    \end{bmatrix}
    \quad \text{and} \quad
    Y =
    \begin{bmatrix}
        y_1 & y_2 & \cdots & y_m
    \end{bmatrix},
\end{equation*}
and additionally a set of ansatz functions or observables
\begin{equation*}
    \mathcal{D} = \left\{ \psi_1, \, \psi_2, \, \dots, \, \psi_k \right\},
\end{equation*}
with $ \psi_i : \mathcal{X} \to \R $. Thus, $ X, Y \in \R^{d \times m} $. The vectors $ x_i $ are used as collocation points to approximate the integrals required for the approximation of the Koopman operator. Let
\begin{equation} \label{eq:Psi}
    \Psi =
    \begin{bmatrix}
        \psi_1 & \psi_2 & \cdots & \psi_k
    \end{bmatrix}^T,
\end{equation}
$ \Psi : \mathcal{X} \to \R^k $, be the vector of all ansatz functions, then $ \mathcal{K} $ can be approximated by a matrix $ \hat{K} \in \R^{k \times k} $, with
\begin{equation*}
    \hat{K}^T = \hat{A} \hat{G}^+,
\end{equation*}
where $ ^+ $ denotes the pseudoinverse. The matrices $ \hat{A}, \, \hat{G} \in \R^{k \times k} $ are defined as
\begin{equation} \label{eq:A and G entries}
    \begin{split}
        \hat{A} &= \frac{1}{m} \sum_{l=1}^m \Psi(y_l) \Psi(x_l)^T, \\
        \hat{G} &= \frac{1}{m} \sum_{l=1}^m \Psi(x_l) \Psi(x_l)^T.
    \end{split}
\end{equation}
As before, we use the $ \hat{\phantom{a}} $ symbol to distinguish the matrices from the tensor approximations that will be introduced in Section~\ref{sec:Tensor-based approximation}. An approximation of the eigenfunction $ \varphi_i $ of the Koopman operator $ \mathcal{K} $ is then given by
\begin{equation*}
    \varphi_i = \xi_i \Psi,
\end{equation*}
where $ \xi_i $ is the $ i $-th left eigenvector of the matrix $ \hat{K}^T $. Alternatively, the generalized eigenvalue problem
\begin{equation} \label{eq:EDMD generalized eig K}
    \xi_i \hat{A} = \lambda_i \xi_i \hat{G}
\end{equation}
can be solved, provided that $ \hat{G} $ is regular. To compute eigenfunctions of the Perron--Frobenius operator, the corresponding eigenvalue problem
\begin{equation*}
    \xi_i \hat{A}^T = \lambda_i \xi_i \hat{G}
\end{equation*}
needs to be solved. Note that this formulation is similar to the variational approach to compute eigenfunctions of transfer operators of reversible processes presented in~\cite{NKPMN14, NSVN15}. For more details, we refer the reader to~\cite{KKS15}.

\section{Tensor formats}
\label{sec:Tensor formats}

Several different tensor formats have been developed in the past, e.g.\ the canonical format, the Tucker format, and the tensor-train format. In this section, we will briefly introduce tensors and the required notation. The overall goal is to rewrite the methods presented in the previous section as tensor-based methods and to take advantage of low-rank tensor approximations and the fact that the dynamics of high-dimensional systems can often be decomposed.

\subsection{Full format}

A tensor in full format is simply a multidimensional array $ \mathbf{v} \in \R^{k_1 \times \dots \times k_d} $. (A variation of this format is the sparse format which stores only the nonzero entries and is used, for example, in the sparse grid approach~\cite{Hac14}.) The entries of a tensor $ \mathbf{v} $ are indexed by $ \mathbf{v}_\mathbf{i} = \mathbf{v}_{i_1, \dots, i_d} = \mathbf{v}[\mathbf{i}] = \mathbf{v}[i_1, \dots, i_d] $, where $ \mathbf{i} = (i_1, \dots, i_d) $ is a multi-index. Addition and subtraction are trivially defined element-wise. Multiplication of a tensor $ \mathbf{v} $ by a scalar $ c \in \R $ is naturally generalized as $ (c \mathbf{v})[i_1, \dots, i_d] = c \mathbf{v}[i_1, \dots, i_d] $. Matrix-vector multiplication is defined as follows: Given a linear operator $ \mathbf{A} $ defined on a tensor space $ \R^{k_1 \times \cdots \times k_d} $, with
\begin{equation*}
    \mathbf{A} = \mathbf{A}[i_1, \dots, i_d, j_1, \dots, j_d] \in \R^{k_1 \times \cdots \times k_d \times k_1 \times \cdots \times k_d},
\end{equation*}
the product of $ \mathbf{A} $ and $ \mathbf{v} $ is
\begin{equation*}
    (\mathbf{A} \mathbf{v})[i_1, \dots, i_d] = \sum_{j_1=1}^{k_1} \dots \sum_{j_d=1}^{k_d}
        \mathbf{A}[i_1, \dots, i_d, j_1, \dots, j_d]
        \mathbf{v}[j_1, \dots, j_d]
\end{equation*}
or in shorthand notation, using multi-indices,
\begin{equation*}
    (\mathbf{A} \mathbf{v})_\mathbf{i} = \sum_{\mathbf{j}}
        \mathbf{A}_\mathbf{ij} \mathbf{v}_\mathbf{j}.
\end{equation*}
Furthermore, the inner product of two tensors $ \mathbf{v}, \mathbf{w} \in \R^{k_1 \times \dots \times k_d} $ is defined as 
\begin{equation*}
    \innerprod{\mathbf{v}}{\mathbf{w}}
        = \sum_{i_1=1}^{k_1} \dots \sum_{i_d=1}^{k_d}
          \mathbf{v}[i_1, \dots, i_d] \mathbf{w}[i_1, \dots, i_d]
\end{equation*}
and the outer product $ \mathbf{v} \otimes \mathbf{w} \in \R^{k_1 \times \dots \times k_d \times k_1 \times \dots \times k_d} $ as a tensor with entries
\begin{equation*}
    (\mathbf{v} \otimes \mathbf{w})[i_1, \dots, i_d, j_1, \dots, j_d] =
        \mathbf{v}[i_1, \dots, i_d] \mathbf{w}[j_1, \dots, j_d].
\end{equation*}
The outer product $ \mathbf{v} \otimes \mathbf{w} $ can be regarded as a linear map that acts on tensors $ \R^{k_1 \times \dots \times k_d} $. Often it is required or convenient to rewrite a tensor as a vector. The \emph{vectorization} of a tensor, denoted $ \vect(\mathbf{v}) $, where $ \vect : \R^{k_1 \times \dots \times k_d} \to \R^{k_1 \cdots k_d} $, reorders the entries of $ \mathbf{v} $ into one column vector. For $ \mathbf{v} \in \R^{2 \times 3 \times 2} $, for example,
\begin{equation*}
    \vect(\mathbf{v}) =
    \begin{bmatrix}[cc|cc|c|cc|cc]
            v_{111} &
            v_{211} &
            v_{121} &
            v_{221} &
            \dots  &
            v_{122} &
            v_{222} &
            v_{132} &
            v_{232}
    \end{bmatrix}^T.
\end{equation*}

For our purposes, we will mainly be interested in eigenvalue problems of the form
\begin{equation*}
    \mathbf{A} \mathbf{v} = \lambda \mathbf{v}
    \quad \text{or} \quad
    \mathbf{A} \mathbf{v} = \lambda \mathbf{B} \mathbf{v}.
\end{equation*}
The treatment of tensors in the full format often leads to storage problems. Thus, different formats have been developed to overcome this problem. Instead of working with the full format, we will use compressed formats such as the $ r $-term or TT format for numerical computations in order to minimize computational costs as well as storage requirements. Except for very particular examples, it is impossible to compress the data without any compression error~\cite{Hac14}. Typically, the tensor representation is just an approximation of the original data. Below, we will describe different compressed tensor formats, the introduction is based on~\cite{Hac14}.

\subsection{Canonical format}

A tensor space is given by $ \mathbf{V} = \bigotimes_{\mu=1}^d V_\mu $, where $ V_1, \dots, V_d $ are vector spaces defined over the same field $ \mathbb{K} $, typically $ \R $ or $ \C $. An \emph{elementary tensor} is defined to be a product of the form
\begin{equation*}
    \mathbf{v} = v_1 \otimes \cdots \otimes v_d,
\end{equation*}
with $ v_\mu \in V_\mu $, $ \mu = 1, \dots, d $. An \emph{algebraic tensor} is then a linear combination of elementary tensors, i.e.
\begin{equation*}
    \mathbf{v} = \sum_{l=1}^r v_1^{(l)} \otimes \cdots \otimes v_d^{(l)}.
\end{equation*}
This format is also called $ r $-term format or CP format. That is, instead of trying to store the tensor in the dense format, one only considers tensors that can be written as products of the form $ \mathbf{v}[i_1, \dots, i_d] = v_1[i_1] \cdots v_d[i_d] $. If the best approximation of this form is not good enough, then the natural extension is to consider $ \mathbf{v}[i_1, \dots, i_d] = \sum_{l=1}^r v_1^{(l)}[i_1] \cdots v_d^{(l)}[i_d] $, cf.~\cite{BM02, BM05}. Defining
\begin{equation*}
    \mathcal{R}_r = \left\{ \sum_{l=1}^r v_1^{(l)} \otimes \cdots \otimes v_d^{(l)} : v_\mu^{(l)} \in V_\mu \right\}
\end{equation*}
for $ r \in \mathbb{N}_0 $, which implies that $ \{ 0 \} = \mathcal{R}_0 \subset \mathcal{R}_1 \subset \dots \subset \mathbf{V} $, we call
\begin{equation*}
    \rank(\mathbf{v}) = \min \{ r : \mathbf{v} \in \mathcal{R}_r \}
\end{equation*}
the \emph{tensor rank} of $ \mathbf{v} $.

\begin{example}
Let us consider a system of the form $ dx_t = -\grad_x V(x_t) \, dt + \sigma \, dW_t $, where $ V $ is the energy landscape associated with the system. The invariant density of this process is given by $ \mu(x) = \frac{1}{Z} e^{-\beta V(x)} $, where $ \beta $ and $ Z $ are constants \cite{SS13}. Assume that the state space is three-dimensional and that the potential function can be written as $ V(x) = V_1(x_1) + V_2(x_2) + V_3(x_3) $, then
\begin{equation*}
    \mu(x) = \frac{1}{Z} e^{-\beta V_1(x_1)} e^{-\beta V_2(x_2)} e^{-\beta V_3(x_3)}.
\end{equation*}
Thus, storing the invariant density for a grid with $ k $ grid points in each direction in the full format would require an array of size $ k^3 $ while storing it in the canonical tensor format would require only a tensor of rank $ 1 $ and thus an array of size $ 3 k $. \exampleSymbol
\end{example}

Given tensors in the $ r $-term format, basic operations are defined as follows~\cite{Hac14}:
\begin{itemize}
\item \textbf{Addition:}
\begin{equation*}
    \mathbf{v} = \sum_{l=1}^{r_v} \bigotimes_{\mu=1}^d v_\mu^{(l)}, \quad
    \mathbf{w} = \sum_{l=1}^{r_w} \bigotimes_{\mu=1}^d w_\mu^{(l)} \quad \Rightarrow \quad
    \mathbf{x} = \mathbf{v} + \mathbf{w} = \sum_{l=1}^{r_v+r_w} \bigotimes_{\mu=1}^d x_\mu^{(l)},
\end{equation*}
where
\begin{equation*}
    x_\mu^{(l)} =
    \begin{cases}
        v_\mu^{(l)}, & 1 \le l \le r_v, \\
        w_\mu^{(l-r_v)}, & r_v+1 \le l \le r_v+r_w.
    \end{cases}
\end{equation*}
\item \textbf{Matrix-vector multiplication:}
\begin{equation*}
    \mathbf{A} = \sum_{l_A=1}^{r_A} \bigotimes_{\mu=1}^d A_\mu^{(l_A)}, \quad
    \mathbf{v} = \sum_{l_v=1}^{r_v} \bigotimes_{\mu=1}^d v_\mu^{(l_v)} \quad \Rightarrow \quad
    \mathbf{A} \mathbf{v} = \sum_{l_A=1}^{r_A} \sum_{l_v=1}^{r_v} \bigotimes_{\mu=1}^d A_\mu^{(l_A)} v_\mu^{(l_v)}.
\end{equation*}
\end{itemize}

Since these operations increase the rank, truncation is typically required to approximate the resulting tensor $ \mathbf{v} $ with rank $ r_v $ by a tensor $ \tilde{\mathbf{v}} $ with a lower rank $ r_{\tilde{v}} $ by either fixing the rank $ r_{\tilde{v}} $ or by fixing $ \varepsilon $ such that $ \norm{ \mathbf{v} - \tilde{\mathbf{v}} } < \varepsilon $.

\subsection{TT format}

Another frequently used tensor format is the TT format -- TT now stands for \emph{tensor train} instead of the former \emph{tree tensor} --, which can be obtained by successive singular value decompositions. This is a special case of a more general hierarchical format and has been introduced in quantum physics under the name \emph{Matrix Product States} (MPS), see \cite{Hac14} for details. A tensor $ \mathbf{v} \in \R^{k_1 \times \dots \times k_d} $ is decomposed into $ d $ component tensors $ \mathbf{v}_i $ of at most order three (the first and last are of order two and are often, for the sake of simplicity, considered as tensors of order three with ``boundary condition'' $ \rho_0 = \rho_d = 1 $). That is, the entries of $ \mathbf{v} $ are given by
\begin{equation*}
    \mathbf{v}[i_1, \dots, i_d] = \sum_{k_1=1}^{\rho_1} \dots \sum_{k_{d-1}=1}^{\rho_{d-1}}
        \mathbf{v}_1[1, i_1, k_1] \, \mathbf{v}_2[k_1, i_2, k_2] \, \dots \, \mathbf{v}_{d-1}[k_{d-2}, i_{d-1}, k_{d-1}] \, \mathbf{v}_d[k_{d-1}, i_d, 1].
\end{equation*}
For fixed indices, the component tensors of rank three can be regarded as matrices, which leads to a more compact representation
\begin{equation*}
    \mathbf{v}[i_1, \dots, i_d] = V_1[i_1] \, V_2[i_2] \, \cdots \, V_{d-1}[i_{d-1}] \, V_d[i_d]
\end{equation*}
and justifies the original name MPS. Here, the numbers $ \rho_i $ are the ranks of the TT tensor, resulting in a rank vector $ \rho = [\rho_0, \rho_1, \dots, \rho_{d}] $ which determines the complexity of the representation. The main advantages of the TT format are its stability from an algorithmic point of view and reasonable computational costs, provided that the ranks of the tensors are small~\cite{HRS12}. The basic operations such as addition and matrix-vector multiplication are more complex than in the canonical format and can be found, for example, in \cite{Ose11}. Converting a tensor from the canonical format to the TT format is trivial, but the TT representation requires more memory. Numerical toolboxes for the TT decomposition of tensors and several algorithms for solving linear systems of equations are available online, see e.g.~\cite{Ose14}.

\subsection{Comparison}

Complexity-wise, the canonical format would be the ideal candidate for representing tensors since the number of required parameters depends only linearly on the dimension $ d $, the rank $ r $, and the sizes of the individual vector spaces. It turned out, however, that solving even simple problems using the canonical format is hard in practice due to redundancies and instabilities which can lead to numerical problems~\cite{HRS12}. The main advantage of the TT format is its structural simplicity, higher-order tensors are reduced to $ d $ products of tensors of at most order three. Similar approaches have been known in quantum physics for a long time, the rigorous mathematical analysis, however, is still work in progress (see \cite{HRS12} and references therein). For our purposes, we will rely on the TT format and the TT toolbox developed by Oseledets et al.~\cite{Ose14} and implement simple power iteration schemes to solve the resulting eigenvalue problems as we will show in Section~\ref{sec:Eigenvalue problems}. Other tensor formats, however, might be advantageous as well for the analysis of the Perron--Frobenius and Koopman operator. This should be investigated further in the future. One drawback of the TT format is that the decomposition depends on the ordering of the dimensions and thus results in different tensor ranks for different orderings.

\section{Tensor-based approximation}
\label{sec:Tensor-based approximation}

In this section, we will present a tensor-based reformulation of Ulam's method and EDMD and show that these methods are equivalent to the corresponding vector-based counterparts. The new formulation enables the use of the low-rank tensor approximation approaches described in the previous section. That is, variables can be approximated with different degrees of accuracy.

\subsection{Reformulation of Ulam's method}

Given a dynamical system $ S : \mathcal{X} \to \mathcal{X} $, $ \mathcal{X} \subset \R^d $, define a box $ \mathcal{B} $ that contains $ \mathcal{X} $, i.e. 
\begin{equation*}
    \mathcal{B} = \mathcal{I}_1 \times \cdots \times \mathcal{I}_d \supset \mathcal{X},
\end{equation*}
where each $ \mathcal{I}_\mu = [a_\mu, b_\mu] \subset \R $ is an interval. Furthermore, let each $ \mathcal{I}_\mu $ be partitioned into $ k_\mu $ subintervals $ \mathcal{I}_\mu^{i_\mu} $, $ i_\mu = 1, \dots, k_\mu $, such that $ \mathcal{I}_\mu^{i_\mu} \cap \mathcal{I}_\mu^{j_\mu} = \varnothing $ for $ i_\mu \ne j_\mu $. This results in a partitioning of $ \mathcal{B} $ into $ \hat{k} = \prod_{\mu=1}^d k_\mu $ boxes. Using again standard multi-index notation, we will denote $ \mathbf{i} = (i_1, \dots, i_d) $. Equipped with the mapping
\begin{equation} \label{eq:Index mapping}
    \mathbf{i} = (i_1, \dots, i_d) \mapsto
        \hat{i} = 1 + \sum_{\mu = 1}^d \left( \prod_{\nu=1}^{\mu-1} k_\nu \right) (i_\mu - 1),
\end{equation}
each multi-index $ \mathbf{i} $ corresponds to a number $ \hat{i} \in \{ 1, \dots, \hat{k} \} $. This induces a canonical numbering of the boxes
\begin{equation*}
    \mathcal{B}_\mathbf{i} = \mathcal{I}_1^{i_1} \times \dots \times \mathcal{I}_d^{i_d}
                           = \mathcal{B}_{\hat{i}}
\end{equation*}
and the entries of tensors $ \mathbf{x} \in \R^{k_1 \times \dots \times k_d} $ such that $ \mathbf{x}_\mathbf{i} = x_{\hat{i}} $, where $ x = \vect(\mathbf{x}) $. Thus, with the aid of Ulam's method we could now generate the finite-dimensional representation of the Perron--Frobenius operator $ \hat{P} \in \R^{\hat{k} \times \hat{k}} $ as described in Section~\ref{sec:Perron--Frobenius and Koopman operator approximation}. Our goal, however, is to approximate the operator by a tensor $ \mathbf{P} \in \R^{k_1 \times \dots \times k_d \times k_1 \times \dots \times k_d} $. Note that the indicator function for the box $ \mathcal{B}_\mathbf{i} $ can be written as
\begin{equation} \label{eq:indicator product}
    \mathds{1}_{\mathcal{B}_\mathbf{i}}(x) = \prod_{\mu=1}^d \mathds{1}_{\mathcal{I}_\mu^{i_\mu}}(x_\mu)
                                           = \mathds{1}_{\mathcal{B}_{\hat{i}}}(x).
\end{equation}
That is, each $ d $-dimensional indicator function $ \mathds{1}_{\mathcal{B}_{\mathbf{i}}}(x) $ is now written as a product of $ d $ one-dimensional indicator functions $ \mathds{1}_{\mathcal{I}_\mu^{i_\mu}}(x_\mu) $.

\begin{example} \label{ex:TensorUlam}
Let us start with a simple example which illustrates the idea behind the tensor-based formulation. Consider the box discretization $ \{ \mathcal{B}_1, \dots, \mathcal{B}_9 \} $ of $ \mathcal{B} = [0, 3]^2 $ shown in Figure~\ref{fig:Grid example}(a). Thus, using Ulam's method, we would obtain $ 9 $ indicator functions $ \{ \mathds{1}_{\mathcal{B}_1}, \dots, \mathds{1}_{\mathcal{B}_9} \} $ and the matrix $ \hat{P} $ that approximates the Perron--Frobenius operator $ \mathcal{P} $ would be a row-stochastic $ (9 \times 9) $-matrix. An example of such a matrix is shown in Figure~\ref{fig:Grid example}(b), the underlying dynamical system is not relevant here. The goal now is to rewrite this matrix using tensors (cf.~Example~\ref{ex:TensorUlamRewritten}).

\begin{figure}[htb]
    \centering
    \begin{minipage}[t]{0.4\textwidth}
        \centering
        \subfiguretitle{(a)}
        \includegraphics[width=0.7\textwidth]{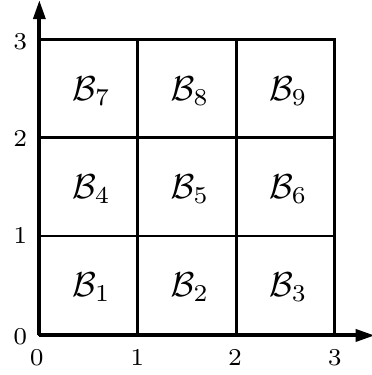}
    \end{minipage}
    \begin{minipage}[t]{0.59\textwidth}
    \centering
    \subfiguretitle{(b)} \vspace*{4ex}
        $ \hat{P} =
        \begin{bmatrix}
            0.68 & 0.09 & 0.07 & 0.04 & 0.02 &     0 & 0.09 & 0.01 &    0 \\ 
            0.36 & 0.06 & 0.40 & 0.03 &    0 &  0.02 & 0.05 & 0.03 & 0.05 \\
            0.07 & 0.12 & 0.64 & 0.02 &    0 &  0.07 & 0.03 &    0 & 0.05 \\
            0.31 & 0.07 & 0.02 & 0.09 & 0.01 &  0.02 & 0.39 & 0.05 & 0.04 \\
            0.25 & 0.05 & 0.18 & 0.06 & 0.01 &  0.02 & 0.17 & 0.05 & 0.21 \\
            0.06 & 0.06 & 0.37 & 0.01 &    0 &  0.04 & 0.07 & 0.03 & 0.36 \\
            0.17 & 0.01 & 0.01 & 0.09 & 0.01 &  0.01 & 0.60 & 0.05 & 0.05 \\
            0.05 &    0 & 0.06 & 0.08 &    0 &  0.05 & 0.29 & 0.13 & 0.34 \\
            0.01 &    0 & 0.03 & 0.02 & 0.02 &  0.11 & 0.05 & 0.09 & 0.67
        \end{bmatrix} $
    \end{minipage}
    \caption{(a) Box discretization of $ \mathcal{B} = [0, 3]^2 $. (b) Example of a resulting approximation $ \hat{P} $ of the Perron--Frobenius operator $ \mathcal{P} $, obtained by applying Ulam's method.}
    \label{fig:Grid example}
\end{figure}

Defining intervals $ \mathcal{I}_{\mu}^1 = [0, 1] $, $ \mathcal{I}_{\mu}^2 = [1, 2] $, and $ \mathcal{I}_{\mu}^3 = [2, 3] $ as well as indicator functions
\begin{equation*}
    \mathds{1}_{\mathcal{I}_\mu^{i_\mu}}(x_\mu) = 
    \begin{cases}
        1, & x_\mu \in \mathcal{I}_\mu^{i_\mu}, \\
        0, & \text{otherwise},
    \end{cases}
\end{equation*}
for $ \mu = 1, 2 $ and $ i_\mu = 1, 2, 3 $, the ansatz functions for the box discretization can be written as
\begin{alignat*}{2}
    \mathds{1}_{\mathcal{B}_{1, 1}}(x) 
        &= \mathds{1}_{\mathcal{I}_1^1}(x_1) \mathds{1}_{\mathcal{I}_2^1}(x_2)
        &&= \mathds{1}_{\mathcal{B}_1}(x), \\
    \mathds{1}_{\mathcal{B}_{2, 1}}(x)
        &= \mathds{1}_{\mathcal{I}_1^2}(x_1) \mathds{1}_{\mathcal{I}_2^1}(x_2)
        &&= \mathds{1}_{\mathcal{B}_2}(x), \\
    \vdots & && \vdots \\
    \mathds{1}_{\mathcal{B}_{2, 3}}(x)
        &= \mathds{1}_{\mathcal{I}_1^2}(x_1) \mathds{1}_{\mathcal{I}_2^3}(x_2)
        &&= \mathds{1}_{\mathcal{B}_8}(x), \\
    \mathds{1}_{\mathcal{B}_{3, 3}}(x)
        &= \mathds{1}_{\mathcal{I}_1^3}(x_1) \mathds{1}_{\mathcal{I}_2^3}(x_2)
        &&= \mathds{1}_{\mathcal{B}_9}(x). \tag*{\exampleSymbol}
\end{alignat*}
\end{example}

The product formulation of the indicator functions naturally leads to a tensor approximation $ \mathbf{P} $ of the Perron--Frobenius operator $ \mathcal{P} $. Let $ Q_\mu : \R^d \to \R $ be the projection onto the $ \mu $-th component of a vector, i.e.~$ Q_\mu(x) = x_\mu $. Then we define
\begin{equation} \label{eq:Ulam tensor formulation}
    \mathbf{P}[i_1, \dots, i_d, j_1, \dots, j_d] = \frac{1}{n} \sum_{l=1}^n \prod_{\mu=1}^d
        \mathds{1}_{\mathcal{I}_\mu^{j_\mu}} \left( Q_\mu\left(S\left(x_{i_1, \dots, i_d}^{(l)}\right)\right) \right),
\end{equation}
where $ x_{i_1, \dots, i_d}^{(l)} $, $ l = 1, \dots, n $, are the test points generated for box $ \mathcal{B}_{\mathbf{i}} $. Instead of checking to which $ d $-dimensional box the test points are mapped, the $ d $ dimensions are now treated separately.

\begin{proposition} \label{pro:Ulam equivalency}
It holds that
\begin{equation*}
    \mathbf{P} \mathbf{v} = \lambda \mathbf{v}
    \quad \Leftrightarrow \quad
    \hat{P} v = \lambda v.
\end{equation*}
\end{proposition}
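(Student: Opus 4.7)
The plan is to exploit the factorization \eqref{eq:indicator product} of the $d$-dimensional indicator function as a product of one-dimensional indicator functions, combined with the index bijection \eqref{eq:Index mapping}, to show that the tensor $\mathbf{P}$ and the matrix $\hat P$ contain literally the same entries up to reindexing. Once this is established, the matrix-vector product in the tensor sense collapses to the ordinary matrix-vector product on the vectorization, and the equivalence of eigenvalue problems follows immediately.

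First, I would fix the bijection $\mathbf{i} \leftrightarrow \hat{i}$ from \eqref{eq:Index mapping} (and analogously for $\mathbf{j} \leftrightarrow \hat{j}$), and note that by construction of the vectorization together with \eqref{eq:Index mapping} we have $\mathbf{v}_{\mathbf{i}} = v_{\hat{i}}$ where $v = \vect(\mathbf{v})$. Next, I would compare entries: starting from \eqref{eq:Ulam tensor formulation},
\begin{equation*}
    \mathbf{P}[i_1,\dots,i_d,j_1,\dots,j_d]
    = \frac{1}{n}\sum_{l=1}^{n} \prod_{\mu=1}^{d}
        \mathds{1}_{\mathcal{I}_\mu^{j_\mu}}\!\left(Q_\mu\!\left(S\!\left(x_{i_1,\dots,i_d}^{(l)}\right)\right)\right),
\end{equation*}
and applying \eqref{eq:indicator product} pointwise at $x = S(x_{\mathbf{i}}^{(l)})$, the product over $\mu$ equals $\mathds{1}_{\mathcal{B}_{\mathbf{j}}}(S(x_{\mathbf{i}}^{(l)})) = \mathds{1}_{\mathcal{B}_{\hat j}}(S(x_{\hat i}^{(l)}))$. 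This is exactly the summand in the Ulam estimate \eqref{eq:Ulam p_ij}, so $\mathbf{P}[i_1,\dots,i_d,j_1,\dots,j_d] = \hat{p}_{\hat{i}\hat{j}}$.

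Using this, the tensor matrix-vector product reads
\begin{equation*}
    (\mathbf{P}\mathbf{v})_{\mathbf{i}}
    = \sum_{\mathbf{j}} \mathbf{P}_{\mathbf{i}\mathbf{j}}\, \mathbf{v}_{\mathbf{j}}
    = \sum_{\hat{j}=1}^{\hat{k}} \hat{p}_{\hat{i}\hat{j}}\, v_{\hat{j}}
    = (\hat{P}v)_{\hat{i}},
\end{equation*}
where the middle equality uses that $\mathbf{j} \mapsto \hat{j}$ is a bijection onto $\{1,\dots,\hat{k}\}$, so rewriting the multi-sum as a single sum merely relabels the summation index. Thus $\vect(\mathbf{P}\mathbf{v}) = \hat{P}\,\vect(\mathbf{v})$, and the relation $\mathbf{P}\mathbf{v} = \lambda \mathbf{v}$ holds entrywise iff $\hat{P}v = \lambda v$ holds entrywise, which is the claimed equivalence.

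There is no real obstacle here; the only thing to be careful about is that the chosen vectorization ordering is genuinely the one induced by the index map \eqref{eq:Index mapping} (so that $\mathbf{v}_{\mathbf{i}} = v_{\hat i}$), since any other flattening convention would require a permutation similarity on $\hat P$. Since the paper fixes \eqref{eq:Index mapping} precisely so that $\mathbf{x}_{\mathbf{i}} = x_{\hat{i}}$, the argument goes through directly and the same proof transports eigenvectors in both directions.
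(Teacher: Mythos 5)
Your argument is correct, and for the entrywise identification $\mathbf{P}[i_1,\dots,i_d,j_1,\dots,j_d]=\hat p_{\hat i\hat j}$ it coincides with the paper's. Where you diverge is in the second half: the paper establishes $(\mathbf{P}\mathbf{v})[i_1,\dots,i_d]=(\hat P v)_{\hat i}$ by induction on the dimension $d$, explicitly peeling off the last index, invoking the inductive hypothesis on the $d$-dimensional slices, and then verifying that stacking the resulting subvectors $v^{(j_{d+1})}$ reproduces the index formula \eqref{eq:Index mapping}. You instead observe that since $\mathbf{j}\mapsto\hat j$ is a bijection onto $\{1,\dots,\hat k\}$ and the paper fixes the vectorization so that $\mathbf{v}_{\mathbf{j}}=v_{\hat j}$, the nested sum over $\mathbf{j}$ is the single sum over $\hat j$ merely relabelled. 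This is a shorter and cleaner route: it avoids the induction entirely and makes it manifest that the whole proposition is a change of index notation. The one thing your approach leans on more heavily is the compatibility of $\vect$ with \eqref{eq:Index mapping}, which the paper states as a convention ($\mathbf{x}_{\mathbf{i}}=x_{\hat i}$) but whose consistency with the column-major $\vect$ example is what the paper's induction is in effect re-verifying; you correctly flag this as the only point requiring care, and since the paper adopts that convention your argument is complete as written.
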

\begin{proof}
It suffices to show that $ \mathbf{P}[i_1, \dots, i_d, j_1, \dots, j_d] = \hat{P}_{\hat{i}\hat{j}} $ and that $ (\mathbf{P} \mathbf{v})[i_1, \dots, i_d] = (\hat{P} v)_{\hat{i}} $. The entries of $ \mathbf{P} $ and $ \hat{P} $ are identical since with \eqref{eq:Ulam p_ij} and \eqref{eq:indicator product}
\begin{align*}
    \hat{P}_{\hat{i}\hat{j}}
        &= \frac{1}{n} \sum_{l=1}^{n}
            \mathds{1}_{\mathcal{B}_{\hat{j}}}\left(S\left(x_{\hat{i}}^{(l)}\right)\right) \\
        &= \frac{1}{n} \sum_{l=1}^{n} \prod_{\mu=1}^d
            \mathds{1}_{\mathcal{I}_\mu^{j_\mu}}\left(Q_\mu\left(S\left(x_{i_d, \dots, i_d}^{(l)}\right)\right)\right)
         = \mathbf{P}[i_1, \dots, i_d, j_1, \dots, j_d].
\end{align*}
For $ d = 1 $, the multi-index $ \mathbf{i} $ is mapped to $ \hat{i} = i_1 $ and $ \mathbf{j} $ to $ \hat{j} = j_1 $ by \eqref{eq:Index mapping}. Furthermore, $ \hat{k} = k_1 $ and
\begin{equation*}
    (\mathbf{P} \mathbf{v})[i_1]
        = \sum_{j_1=1}^{k_1} \mathbf{P}[i_1, j_1] \mathbf{v}[j_1]
        = \sum_{\hat{j}=1}^{\hat{k}} \hat{P}_{\hat{i} \hat{j}} v_{\hat{j}}
        = (\hat{P} v)_{\hat{i}}.
\end{equation*}
Then, we obtain by induction
\begin{equation*}
    \begin{split}
    (\mathbf{P} \mathbf{v})[i_1, \dots, i_{d+1}]
        &= \sum_{j_1=1}^{k_1} \dots \sum_{j_{d+1}=1}^{k_{d+1}}
            \mathbf{P}[i_1, \dots, i_{d+1}, j_1, \dots, j_{d+1}]
            \mathbf{v}[j_1, \dots, j_{d+1}] \\
        &= \sum_{j_{d+1}=1}^{k_{d+1}} \left( \sum_{j_1=1}^{k_1} \dots \sum_{j_d=1}^{k_d}
            \mathbf{P}[i_1, \dots, i_d, i_{d+1}, j_1, \dots, j_d, j_{d+1}]
            \mathbf{v}[j_1, \dots, j_d, j_{d+1}] \right) \\
        &= \sum_{j_{d+1}=1}^{k_{d+1}} \left( \sum_{j_1=1}^{k_1} \dots \sum_{j_d=1}^{k_d}
            \mathbf{P}^{(i_{d+1},j_{d+1})}[i_1, \dots, i_d, j_1, \dots, j_d]
            \mathbf{v}^{(j_{d+1})}[j_1, \dots, j_d] \right) \\
        &= \begin{bmatrix}
               \hat{P}^{(i_{d+1},1)} & \dots & \hat{P}^{(i_{d+1},k_{d+1})}
           \end{bmatrix}
           \begin{bmatrix}
               v^{(1)} \\
               \vdots \\[1ex]
               v^{(k_{d+1})}
           \end{bmatrix}
           \begin{matrix}
               \left. \mkern-6mu \right\} \in \R^{k_1 \cdots k_d} \\
               \vdots \\[1ex]
               \left. \mkern-6mu \right\} \in \R^{k_1 \cdots k_d}
           \end{matrix} \\
        &= (\hat{P} v)_{\hat{i}}.
    \end{split}
\end{equation*}
Here, the matrices and vectors with the superscripts $ (i_{d+1},j_{d+1}) $ and $ (j_{d+1}) $, respectively, are obtained by fixing the corresponding indices, that is, these matrices and vectors are lower-dimensional slices of the corresponding higher-dimensional objects. Since the entries of $ v^{(j_{d+1})} $ are indexed by $ (j_1, \dots, j_d) \mapsto \hat{j} = 1 + \sum_{\mu = 1}^d \left( \prod_{\nu=1}^{\mu-1} k_\nu \right) (j_\mu - 1) $, the entries of the larger vector $ v $ -- obtained by stacking the vectors $ v^{(j_{d+1})} \in \R^{k_1 \cdots k_d} $ -- are indexed by
\begin{equation*}
    (j_1, \dots, j_{d+1}) \mapsto \hat{j} = 1 + \sum_{\mu = 1}^d \left( \prod_{\nu=1}^{\mu-1} k_\nu \right) (j_\mu - 1) + (k_1 \cdots k_d)(j_{d+1} - 1) = 1 + \sum_{\mu = 1}^{d+1} \left( \prod_{\nu=1}^{\mu-1} k_\nu \right) (j_\mu - 1).   \qedhere
\end{equation*}
\end{proof}

Analogously, left eigenvectors of $ \mathbf{P} $ correspond to left eigenvectors of $ \hat{P} $. The implementation of the tensor-based formulation is straightforward since only index computations for intervals are required, a numbering of the $ d $-dimensional boxes is not needed anymore. Let $ \mathbb{T} $ be the set of all test points and $ \ind : \R^d \to \mathbb{N}^d $ the functions that returns the corresponding multi-index $ \mathbf{i} $ for a point $ x \in \R^d $ so that $ x_\mu \in I_\mu^{i_\mu} $, $ \mu = 1, \dots, d $. Then, Ulam's method can simply be expressed as:
\begin{algorithmic}
    \For{\textbf{each} test point $ x \in \mathbb{T} $}
        \State $ y = S(x) $
        \State $ \mathbf{i} = \ind(x) $
        \State $ \mathbf{j} = \ind(y) $
        \State $ \mathbf{P}[i_1, \dots, i_d, j_1, \dots, j_d] \leftarrow \mathbf{P}[i_1, \dots, i_d, j_1, \dots, j_d] + \frac{1}{n} $
    \EndFor
\end{algorithmic}
In the standard formulation, the rows of the matrix $ \hat{P} $ sum up to one. Correspondingly, the sum of all entries of each subtensor of $ \mathbf{P} $ with fixed multi-index $ \mathbf{i} $ is one, i.e.
\begin{equation*}
    \sum_{j_1=1}^{k_1} \dots \sum_{j_d=1}^{k_d} \mathbf{P}[i_1, \dots, i_d, j_1, \dots, j_d] = 1.
\end{equation*}

\begin{example} \label{ex:TensorUlamRewritten}
Let us consider Example~\ref{ex:TensorUlam} again. We select $ n $ random test points $ x_{i_1, i_2}^{(l)} $ for each box $ \mathcal{B}_{i_1, i_2} $, $ i_1, i_2 = 1, \dots, 3 $ and $ l = 1, \dots, n $. This leads to a new approximation $ \mathbf{P} \in \R^{3 \times 3 \times 3 \times 3} $. Written in Matlab notation, we would obtain
\begingroup
\allowdisplaybreaks
\begin{alignat*}{2}
    \mathbf{P}[:,:,1,1] =
    \begin{bmatrix}
        0.68 & 0.31 & 0.17 \\
        0.36 & 0.25 & 0.05 \\
        0.07 & 0.06 & 0.01
    \end{bmatrix}, \quad &
    \mathbf{P}[:,:,1,2] =
    \begin{bmatrix}
        0.04 & 0.09 & 0.09 \\
        0.03 & 0.06 & 0.08 \\
        0.02 & 0.01 & 0.02
    \end{bmatrix}, \quad &&
    \mathbf{P}[:,:,1,3] =
    \begin{bmatrix}
        0.09 & 0.39 & 0.60 \\
        0.05 & 0.17 & 0.29 \\
        0.03 & 0.07 & 0.05
    \end{bmatrix}, \\
    \mathbf{P}[:,:,2,1] =
    \begin{bmatrix}
        0.09 & 0.07 & 0.01 \\
        0.06 & 0.05 &    0 \\
        0.12 & 0.06 &    0
    \end{bmatrix}, \quad &
    \mathbf{P}[:,:,2,2] =
    \begin{bmatrix}
        0.02 & 0.01 & 0.01 \\
            0 & 0.01 &    0 \\
            0 &    0 & 0.02
    \end{bmatrix}, \quad &&
    \mathbf{P}[:,:,2,3] =
    \begin{bmatrix}
        0.01 & 0.05 & 0.05 \\
        0.03 & 0.05 & 0.13 \\
        0 & 0.03 & 0.09
    \end{bmatrix}, \\
    \mathbf{P}[:,:,3,1] =
    \begin{bmatrix}
        0.07 & 0.02 & 0.01 \\
        0.40 & 0.18 & 0.06 \\
        0.64 & 0.37 & 0.03
    \end{bmatrix}, \quad &
    \mathbf{P}[:,:,3,2] =
    \begin{bmatrix}
            0 & 0.02 & 0.01 \\
        0.02 & 0.02 & 0.05 \\
        0.07 & 0.04 & 0.11
    \end{bmatrix}, \quad &&
    \mathbf{P}[:,:,3,3] =
    \begin{bmatrix}
            0 & 0.04 & 0.05 \\
        0.05 & 0.21 & 0.34 \\
        0.05 & 0.36 & 0.67
    \end{bmatrix}.
\end{alignat*}
\endgroup
Note that each matrix $ \mathbf{P}[:, :, j_1, j_2] $ corresponds to a column of matrix $ \hat{P} $ in Figure~\ref{fig:Grid example}. For the resulting eigenvalue problem, we obtain -- using a simple power iteration, see Section~\ref{sec:Eigenvalue problems} -- the left eigenvector $ \mathbf{v}_1 $ corresponding to the largest eigenvalue $ \lambda_1 = 1 $
\begin{equation*}
    \mathbf{v}_1 =
    \begin{bmatrix}
        0.6503 & 0.1393 & 0.4501 \\
        0.1046 & 0.0261 & 0.0901 \\
        0.4355 & 0.0864 & 0.3719
    \end{bmatrix},
\end{equation*}
which is a good approximation of the largest left eigenvector $ v_1 $ of the matrix $ \hat{P} $ given by
\begin{equation*}
    v_1 =
    \begin{bmatrix}
        0.6503 & 0.1393 & 0.4501 & 0.1046 & 0.0261 & 0.0901 & 0.4355 & 0.0864 & 0.3719
    \end{bmatrix}. \tag*{\exampleSymbol}
\end{equation*}
\end{example}

Instead of working with the full format, the matrix $ \mathbf{P} $ can also be expressed directly using the canonical tensor format. Assume that a test point $ x $ is mapped from box $ \mathcal{B}_\mathbf{i} $ to box $ \mathcal{B}_\mathbf{j} $, where $ \mathbf{i} = (i_1, \dots, i_d) $ and $ \mathbf{j} = (j_1, \dots, j_d) $ are again multi-indices. Now let $ e_\mu^{i_\mu} \in \R^{k_\mu} $ be the $ i_\mu $-th unit vector of size $ k_\mu $ and let
\begin{equation*}
    \mathbf{e}^{\mathbf{i}} = \bigotimes_{\mu=1}^d e_\mu^{i_\mu}.
\end{equation*}
Then the elementary tensor $ \mathbf{e}^{\mathbf{i}} \otimes \mathbf{e}^{\mathbf{j}} \in \R^{k_1 \times \dots \times k_d \times k_1 \times \dots \times k_d} $ describes the mapping of this point. Furthermore, let $ \ind $ be again the function that returns the multi-index of the box that contains the point $ x $ and $ \mathbb{T} $ the set of all test points, then the matrix $ \mathbf{P} $ can be represented as a sum of elementary tensors of this form, i.e.
\begin{equation*}
    \mathbf{P} = \frac{1}{n} \sum_{x \in \mathbb{T}}
        \mathbf{e}^{\ind(x)} \otimes \mathbf{e}^{\ind(S(x))}.
\end{equation*}
That is, the number of elementary tensors is $ \hat{k} n $, i.e.~the number of boxes multiplied by the number of test points per box, and thus potentially too large to store (unless sparse tensor formats are used). However, we will store only a low-rank approximation to reduce the required storage space. Note that this elementary tensor representation can also be easily converted into the TT format for numerical computations using the TT toolbox.

The question now is whether the tensor representation offers advantages over the standard formulation of Ulam's method. The goal is to approximate the eigenfunctions of the Perron--Frobenius operator or Koopman operator using low-rank tensors, reducing the computational cost as well as the memory consumption. Before we present numerical results, let us also rewrite EDMD in tensor form.

\subsection{Reformulation of EDMD}

Instead of writing $ \Psi $ as a vector of functions $ \Psi = [\psi_1, \psi_2, \dots, \psi_{\hat{k}}]^T $, we now write $ \Psi $ as a tensor of functions. We start by selecting basis functions for each dimension separately. Let
\begin{equation*}
    \mathcal{D}_\mu = \{ \psi_\mu^1, \dots, \psi_\mu^{k_\mu} \}
\end{equation*}
be the set of basis functions for dimension $ \mu $, $ \mu = 1, \dots, d $. Here, each $ \psi_\mu^{i_\mu} : \R \to \R $ depends only on $ x_\mu $. Then our tensor basis for EDMD contains all functions of the form
\begin{equation} \label{eq:EDMD product basis}
    \boldsymbol{\psi}^\mathbf{i}(x) = \prod_{\mu = 1}^d \psi_\mu^{i_\mu}(x_\mu),
\end{equation}
where $ \mathbf{i} = (i_1, \dots, i_d) $ is a multi-index. Thus,
\begin{equation*}
    \mathcal{D} =
        \left\{ \prod_{\mu = 1}^d \psi_\mu^{i_\mu}, \psi_\mu^{i_\mu} \in \mathcal{D}_\mu \right\}.
\end{equation*}
That is, we have again $ \hat{k} = \prod_{\mu=1}^d k_\mu $ basis functions and $ \boldsymbol{\Psi} : \R^d \to \R^{k_1 \times \dots \times k_d} $, with
\begin{equation*}
    \boldsymbol{\Psi}[i_1, \dots, i_d](x) = \boldsymbol{\psi}^\mathbf{i}(x).
\end{equation*}

\begin{example}
Let us begin with a simple example: Assume we have a two-dimensional domain $ \mathcal{X} \subset \R^2 $ and we want to use monomials of order up to three $ \{ 1, x_\mu, x_\mu^2, x_\mu^3 \} $ in $ x_1 $ and $ x_2 $ direction to approximate the eigenfunctions of the Koopman operator. Written in tensor form, we obtain
\begin{equation*}
    \boldsymbol{\Psi}(x) =
    \begin{bmatrix}
        1     &       x_2 &       x_2^2 &       x_2^3 \\
        x_1   & x_1   x_2 & x_1   x_2^2 & x_1   x_2^3 \\
        x_1^2 & x_1^2 x_2 & x_1^2 x_2^2 & x_1^2 x_2^3 \\
        x_1^3 & x_1^3 x_2 & x_1^3 x_2^2 & x_1^3 x_2^3
    \end{bmatrix}.
\end{equation*}
That is, $ \boldsymbol{\Psi}[i_1, i_2](x) = x_1^{i_1-1} x_2^{i_2-1} $. Analogously, for a $ d $-dimensional domain, we would obtain $ \boldsymbol{\Psi}(x) \in \R^{k_1 \times \cdots \times k_d} $ with
\begin{equation*}
    \boldsymbol{\Psi}[i_1, \dots, i_d](x) = x_1^{i_1-1} \, \cdots \, x_d^{i_d-1}. \tag*{\exampleSymbol}
\end{equation*} 
\end{example}

Such a tensor basis is often used for high-dimensional problems, see also \cite{FJK09}. Typical basis functions are monomials, Hermite polynomials, or trigonometric functions. In the standard formulation, all basis functions are enumerated and rewritten in vector form \eqref{eq:Psi}. The difference here is that the tensor form will be preserved. We will use again \eqref{eq:Index mapping} as a mapping from multi-index to single index when required.

Now $ \mathbf{A}, \mathbf{G} \in \R^{k_1 \times \dots \times k_d \times k_1 \times \dots \times k_d} $ can be constructed as follows:
\begin{equation*}
    \begin{split}
        \mathbf{A}[i_1, \dots, i_d, j_1, \dots, j_d]
            &= \innerprod{\mathcal{K} \mathbf{\Psi}[i_1, \dots, i_d]}{\mathbf{\Psi}[j_1, \dots, j_d]}, \\
        \mathbf{G}[i_1, \dots, i_d, j_1, \dots, j_d]
            &= \innerprod{\mathbf{\Psi}[i_1, \dots, i_d]}{\mathbf{\Psi}[j_1, \dots, j_d]}.
    \end{split}
\end{equation*}
The entries are again -- as in the standard EDMD formulation -- approximated using a collocation approach. EDMD computes the entries as shown in~\eqref{eq:A and G entries}, for the new tensor-based formulation this results in
\begin{equation*}
    \begin{split}
        \mathbf{A}[i_1, \dots, i_d, j_1, \dots, j_d] &= \frac{1}{m} \sum_{l=1}^m
            \boldsymbol{\Psi}[i_1, \dots, i_d](y_l) \boldsymbol{\Psi}[j_1, \dots, j_d](x_l), \\
        \mathbf{G}[i_1, \dots, i_d, j_1, \dots, j_d] &= \frac{1}{m} \sum_{l=1}^m
            \boldsymbol{\Psi}[i_1, \dots, i_d](x_l) \boldsymbol{\Psi}[j_1, \dots, j_d](x_l),
    \end{split}
\end{equation*}
or in short form, using the outer product,
\begin{equation} \label{eq:tensor A and G}
    \begin{split}
        \mathbf{A} &= \frac{1}{m} \sum_{l=1}^m
            \boldsymbol{\Psi}(y_l) \otimes \boldsymbol{\Psi}(x_l), \\
        \mathbf{G} &= \frac{1}{m} \sum_{l=1}^m
            \boldsymbol{\Psi}(x_l) \otimes \boldsymbol{\Psi}(x_l),
    \end{split}
\end{equation}
which in turn results in a generalized eigenvalue problem of the form
\begin{equation*}
    \boldsymbol{\xi} \mathbf{A} = \lambda \boldsymbol{\xi} \mathbf{G}.
\end{equation*}
Note that the eigenvalue problem is the same as \eqref{eq:EDMD generalized eig K} in the standard case. For the sake of simplicity, we are omitting the index $ \mathbf{i} $ here.

\begin{proposition}
Provided that the basis functions can be written in tensor product form \eqref{eq:EDMD product basis},
\begin{equation*}
    \boldsymbol{\xi} \mathbf{A} = \lambda \boldsymbol{\xi} \mathbf{G}
    \quad \Leftrightarrow \quad
    \xi \hat{A} = \lambda \xi \hat{G}.
\end{equation*}
\end{proposition}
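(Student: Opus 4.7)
My plan is to mirror the strategy of Proposition~\ref{pro:Ulam equivalency} almost verbatim: first establish entrywise equality between $\mathbf{A}, \mathbf{G}$ and $\hat{A}, \hat{G}$ under the index mapping \eqref{eq:Index mapping}, and then show that the left-contraction appearing in the tensor eigenvalue problem is identical to the row-vector/matrix product after vectorization. Once both are in place, the equivalence of the generalized eigenvalue problems reduces to componentwise identification.

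For the first step, the tensor product hypothesis \eqref{eq:EDMD product basis} is doing all the work. By assumption $\boldsymbol{\Psi}[i_1,\dots,i_d](x) = \prod_{\mu=1}^d \psi_\mu^{i_\mu}(x_\mu)$, and in the standard EDMD enumeration the same product is what the single-index ansatz function $\Psi_{\hat{i}}(x)$ equals after applying \eqref{eq:Index mapping}. Substituting this identity into \eqref{eq:tensor A and G} and comparing with \eqref{eq:A and G entries} yields immediately
\begin{equation*}
    \mathbf{A}[i_1,\dots,i_d,j_1,\dots,j_d] = \hat{A}_{\hat{i}\hat{j}}, \qquad
    \mathbf{G}[i_1,\dots,i_d,j_1,\dots,j_d] = \hat{G}_{\hat{i}\hat{j}},
\end{equation*}
so that $\mathbf{A}$ and $\mathbf{G}$ are just the reshapings of $\hat{A}$ and $\hat{G}$ along the tensor indices.

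For the second step, I would show $(\boldsymbol{\xi}\mathbf{A})[j_1,\dots,j_d] = (\xi \hat{A})_{\hat{j}}$ (and similarly for $\mathbf{G}$) by the same induction on $d$ already carried out for $\mathbf{P}$ in the proof of Proposition~\ref{pro:Ulam equivalency}. The base case $d=1$ is trivial, and the induction step splits off the outermost summation over $j_{d+1}$, recognizes the remaining $d$-fold contraction as an instance of the induction hypothesis applied to the slices $\mathbf{A}^{(i_{d+1},j_{d+1})}$ and $\boldsymbol{\xi}^{(i_{d+1})}$, and then verifies that stacking these slices reproduces the larger vector index via \eqref{eq:Index mapping}. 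Since the tensor $\boldsymbol{\xi}$ is related to $\xi$ by $\boldsymbol{\xi}[i_1,\dots,i_d] = \xi_{\hat{i}}$ with $\xi = \vect(\boldsymbol{\xi})$, this contraction identity passes through cleanly, and the same argument works verbatim for $\mathbf{G}$ in place of $\mathbf{A}$.

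Putting the two pieces together, $\boldsymbol{\xi}\mathbf{A} = \lambda \boldsymbol{\xi}\mathbf{G}$ evaluated at multi-index $(j_1,\dots,j_d)$ is the equation $(\xi \hat{A})_{\hat{j}} = \lambda (\xi \hat{G})_{\hat{j}}$ at the corresponding single index, so the two generalized eigenvalue problems are equivalent. The only nontrivial bookkeeping is the induction on the left-contraction, and as it is essentially a transcription of the argument already given for Ulam's method, I expect the proof to be short — in fact it would be reasonable to simply invoke the computation in Proposition~\ref{pro:Ulam equivalency} applied to $\mathbf{A}$ and $\mathbf{G}$ rather than repeat it.
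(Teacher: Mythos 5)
Your proposal matches the paper's proof essentially verbatim: show entrywise equality of $\mathbf{A}, \mathbf{G}$ with $\hat{A}, \hat{G}$ under the index mapping by substituting the product form of the basis, then defer the equivalence of the contraction $\boldsymbol{\xi}\mathbf{A}$ with the vectorized product $\xi\hat{A}$ to the induction already carried out in Proposition~\ref{pro:Ulam equivalency}. Your closing remark that it is cleanest to simply invoke that earlier computation rather than repeat it is exactly what the paper does.
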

\begin{proof}
We just show that the entries of $ \mathbf{A} $ and $ \hat{A} $ as well as the entries of $ \mathbf{G} $ and $ \hat{G} $ are identical, the rest -- the equivalency of the matrix-vector and tensor products -- follows from Proposition~\ref{pro:Ulam equivalency}. Assuming that the basis can be written in the product form, we obtain from~\eqref{eq:A and G entries}
\begin{equation*}
    \begin{split}
        \hat{a}_{\hat{i}\hat{j}}
            &= \frac{1}{m} \sum_{l=1}^m \psi_{\hat{i}}(y_l) \psi_{\hat{j}}(x_l) \\
            &= \frac{1}{m} \sum_{l=1}^m \prod_{\mu = 1}^d \psi_\mu^{i_\mu}(Q_\mu(y_l))
                                        \prod_{\mu = 1}^d \psi_\mu^{j_\mu}(Q_\mu(x_l)) \\
            &= \frac{1}{m} \sum_{l=1}^m \boldsymbol{\Psi}[i_1, \dots, i_d](y_l)
                                        \boldsymbol{\Psi}[j_1, \dots, j_d](x_l) \\
            &= \mathbf{A}[i_1, \dots, i_d, j_1, \dots, j_d]
    \end{split}
\end{equation*}
and analogously $ \hat{g}_{\hat{i}\hat{j}} = \mathbf{G}[i_1, \dots, i_d, j_1, \dots, j_d] $. Here, $ Q_\mu $ is again the projection onto the $ \mu $-th component of a vector, cf.~\eqref{eq:Ulam tensor formulation}.
\end{proof}

Instead of storing the dense matrices $ \mathbf{A} $ and $ \mathbf{G} $, we can again directly represent these matrices using the canonical tensor format. The basis was chosen in such a way that $ \boldsymbol{\Psi}(x) $ can be written as
\begin{equation*}
    \boldsymbol{\Psi}(x) = \bigotimes_{\mu=1}^d \tilde{\psi}_\mu(x_\mu),
\end{equation*}
where $ \tilde{\psi}_\mu = [\psi_\mu^1, \dots, \psi_\mu^{k_\mu}]^T \in \R^{k_\mu} $. 
With \eqref{eq:tensor A and G} it follows that $ \mathbf{A} $ and $ \mathbf{G} $ can be written as sums of $ m $ elementary tensors. As before, we are not storing the full-rank tensor, but only low-rank approximations.

The eigentensors $ \boldsymbol{\xi} \in \R^{k_1 \times \dots \times k_d} $ of the generalized eigenvalue problem can then be used to approximate the eigenfunctions of the Perron--Frobenius operator or Koopman operator: Let $ \boldsymbol{\xi} $ be a left eigentensor, then
\begin{equation*}
    \varphi(x) = \innerprod{\boldsymbol{\xi}}{\boldsymbol{\Psi}(x)}
\end{equation*}
approximates an eigenfunction of the Koopman operator. Analogously, if $ \boldsymbol{\xi} $ is a right eigentensor of the generalized eigenvalue problem -- observe that $ \mathbf{G}[i_1, \dots, i_d, j_1, \dots, j_d] = \mathbf{G}[j_1, \dots, j_d, i_1, \dots, i_d] $ --, then $ \varphi(x) $ is an approximation of the corresponding eigenfunction of the Perron--Frobenius operator, see also~\cite{KKS15}.

To compute the dominant eigenfunctions of the Koopman operator or Perron--Frobenius operator, we will use simple power iteration schemes outlined in the next section. General purpose eigenvalue solvers for nonsymmetric generalized eigenvalue problems are, to our knowledge, not part of the tensor libraries yet. Solvers for symmetric (non-generalized) eigenvalue problems already exist and are part of the TT toolbox~\cite{Ose14}.

\section{Eigenvalue problems}
\label{sec:Eigenvalue problems}

In Section~\ref{sec:Perron--Frobenius and Koopman operator approximation} and Section~\ref{sec:Tensor-based approximation}, we have shown that in order to compute the eigenfunctions of the Perron--Frobenius operator and Koopman operator, respectively, using either Ulam's method or EDMD, we need to solve standard eigenvalue problems or generalized eigenvalue problems. For the reformulated version of these methods, we have to develop the required numerical algorithms to solve the resulting tensor-based eigenvalue problems. At the time of writing, we are not aware of any tensor toolbox containing numerical methods for nonsymmetric generalized eigenvalue problems. Methods for the computation of eigenvectors of symmetric positive definite matrices in the TT format have been proposed in \cite{HRS12}, where the eigenvalue problem is rewritten as a (Rayleigh quotient based) minimization problem which is then solved using the \emph{Alternating Linear Scheme} (ALS). In practice, these methods have recently also been successfully used for nonsymmetric problems, although convergence has not been shown yet~\cite{GMS16}.

Suitable methods for eigenvalue problems can be subdivided into two main categories as explained in~\cite{GKT13} (see also references therein, e.g.~\cite{BM05}): The first category of methods is based on combining classical iterative algorithms with low-rank truncation after each step, the second is based on a reformulation as an optimization problem, where admissible solutions are constrained to the set of low-rank tensors. In this section, we will describe a generalization of simple power iteration methods -- belonging to the first category -- to tensor-based eigenvalue problems. For a detailed description of general power iteration methods, we refer to~\cite{GVL13}. Power iteration and inverse power iteration for tensors have also been proposed in~\cite{BM05}. The main difference between the standard algorithms and the tensor-based counterpart is that for the latter truncation is used to keep the ranks of the tensors low. It is important that the iteration moves from the initial state to the final state without creating intermediate solutions with an excessive rank~\cite{BM05}.

\subsection{Power iteration methods for standard eigenvalue problems}

In what follows, let $ \mathcal{T} $ denote the truncation of a tensor. Then instead of a classical iteration scheme of the form $ x_{k+1} = F(x_k) $, we simply obtain $ \mathbf{x}_{k+1} = \mathcal{T}(F(\mathbf{x}_k)) $. For an eigenvalue problem of the form $ \mathbf{A} \mathbf{v} = \lambda \mathbf{v} $, given an initial guess $ \mathbf{v}_0 $ for the dominant eigenvector, the power iteration algorithm computes:
\begin{algorithmic}
    \For{$ k = 1, 2, \dots $}
        \State $ \mathbf{w}^{(k)} = \mathcal{T}\left(\mathbf{A} \mathbf{v}^{(k-1)}\right) $
        \State $ \mathbf{v}^{(k)} = \mathbf{w}^{(k)} / \norm{\mathbf{w}^{(k)}} $
        \State $ \lambda^{(k)} = \innerprod{\mathbf{v}^{(k)}}{\mathcal{T}(\mathbf{A} \mathbf{v}^{(k)})} $
    \EndFor
\end{algorithmic}
The iteration converges to an eigenvector associated with the largest eigenvalue $ \lambda_1 $ of the truncated operator $ \mathbf{A} $ if the eigenvalue is simple and the initial guess $ \mathbf{v}^{(0)} $ has a component in the direction of the corresponding dominant eigenvector $ \mathbf{v}_1 $ \cite{GVL13}. Even if the initial guess does not have a component in the direction of $ \mathbf{v}_1 $, rounding errors typically ensure that this direction will be picked up during the iteration. The rate of convergence depends on the ratio between the second-largest and largest eigenvalue $ \lambda_2 / \lambda_1 $. The main advantage of this method is that it requires only matrix-vector multiplications and can thus easily be used for tensor eigenvalue problems.

A modification of this algorithm to compute eigenvectors corresponding to any eigenvalue is the inverse power iteration with shift, which -- assuming that $ \mathbf{A} - \theta \mathbf{I} $ is nonsingular -- can be written in the form:
\begin{algorithmic}
    \For{$ k = 1, 2, \dots $}
        \State Solve $ (\mathbf{A} - \theta \mathbf{I}) \mathbf{w}^{(k)} = \mathbf{v}^{(k-1)} $
        \State $ \mathbf{v}^{(k)} = \mathbf{w}^{(k)} / \norm{\mathbf{w}^{(k)}} $
        \State $ \lambda^{(k)} = \innerprod{\mathbf{v}^{(k)}}{\mathbf{A} \mathbf{v}^{(k)}} $
    \EndFor
\end{algorithmic}
The parameter $ \theta $ is called shift and the iteration converges to the eigenvalue closest to $ \theta $. This method is just the standard power iteration applied to the matrix $ (\mathbf{A} - \theta \mathbf{I})^{-1} $. Here, the linear solver computes a low-rank approximation of the solution so that truncation is not required.

\subsection{Power iteration methods for generalized eigenvalue problems}

Given matrices $ \mathbf{A} $ and $ \mathbf{B} $, the generalized eigenvalue problem is given by $ \mathbf{A} \mathbf{v} = \lambda \mathbf{B} \mathbf{v} $. In this case, the power iteration method also requires the solution of a linear system of equations. Thus, we can also directly apply the inverse power iteration, where the resulting systems of linear equations are again solved with ALS:
\begin{algorithmic}
    \For{$ k = 1, 2, \dots $}
        \State Solve $ (\mathbf{A} - \theta \mathbf{B}) \mathbf{w}^{(k)}
                = \mathbf{B} \mathbf{v}^{(k-1)} $
        \State $ \mathbf{v}^{(k)} = \mathbf{w}^{(k)} / \norm{\mathbf{w}^{(k)}} $
        \State $ \lambda^{(k)} = \innerprod{\mathbf{v}^{(k)}}{\mathbf{A} \mathbf{v}^{(k)}}
                               / \innerprod{\mathbf{v}^{(k)}}{\mathbf{B} \mathbf{v}^{(k)}} $
    \EndFor
\end{algorithmic}

In order to keep the ranks of the intermediate solutions low, we also approximate the matrices $ \mathbf{P} $ (Ulam's method) or $ \mathbf{A} $ and $ \mathbf{G} $ (EDMD) by low-rank tensors. That is, the initial matrices are converted to $ \mathbf{\tilde{P}} $, $ \mathbf{\tilde{A}} $, and $ \mathbf{\tilde{G}} $ with a lower rank since the rank of these matrices can initially be very high. We are typically only interested in the general behavior of the eigenfunctions, a highly accurate representation of the eigenfunctions is often not needed.

\section{Examples}
\label{sec:Examples}

All examples presented within this section have been implemented in Matlab using -- for the sake of efficiency~-- \texttt{mex}-functions to integrate the SDEs. All tensor computations were carried out with the \emph{TT toolbox}~\cite{Ose14}. For the eigenvector computations, we used our implementation of the simple power iteration methods described in Section~\ref{sec:Eigenvalue problems}.

\subsection{2-dimensional double well problem}

Let us start with a simple 2-dimensional example, a stochastic differential equation of the form
\begin{equation*}
    \begin{split}
        dx_1 &= -\grad_{x_1} V(x_1, x_2) \, dt + \sigma \, dW_1, \\
        dx_2 &= -\grad_{x_2} V(x_1, x_2) \, dt + \sigma \, dW_2,
    \end{split}
\end{equation*}
where $ W_1 $ and $ W_2 $ are two independent standard Wiener processes. Here, the potential is given by
\begin{equation*}
    V(x_1, x_2) = (x_1^2 - 1)^2 + x_2^2,
\end{equation*}
see Figure~\ref{fig:Double well potential} (cf.~\cite{KKS15}). Furthermore, we set $ \sigma = 0.7 $. Note that in this case the potential can be written as $ V(x_1, x_2) = V_1(x_1) + V_2(x_2) $.
\begin{figure}[htb]
    \centering
    \includegraphics[width=0.4\textwidth]{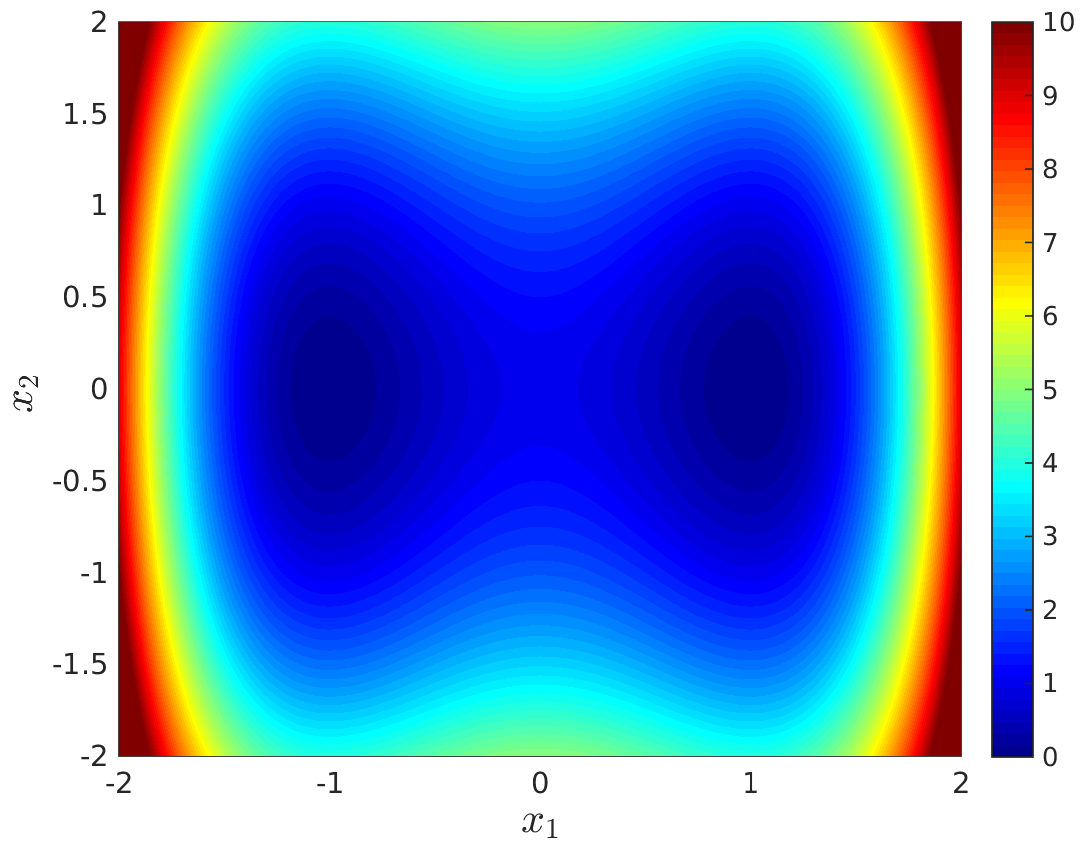}
    \caption{Double-well potential $ V(x_1, x_2) = (x_1^2 - 1)^2 + x_2^2 $.}
    \label{fig:Double well potential}
\end{figure}
In order to analyze the tensor-based methods, we rotate the potential by an angle $ \alpha $ and obtain
\begin{equation*}
    \widetilde{V}(x_1, x_2) = \left((\cos(\alpha) \, x_1 - \sin(\alpha) \, x_2)^2 - 1\right)^2
                             + \left(\sin(\alpha) \, x_1 + \cos(\alpha) \, x_2\right)^2.
\end{equation*}
The two independent Wiener processes $ W_1 $ and $ W_2 $ are rotated accordingly. We would expect that the eigenfunctions of systems with small $ \alpha $ can be accurately approximated by low-rank tensors, whereas systems with a larger value of $ \alpha $ require higher ranks since the dynamics are not aligned with the axes anymore.

\begin{figure}[htb]
    \newcommand*{\factori}{0.2312}
    \newcommand*{\factorii}{0.0353}
    \centering
    \includegraphics[width=\factori\textwidth]{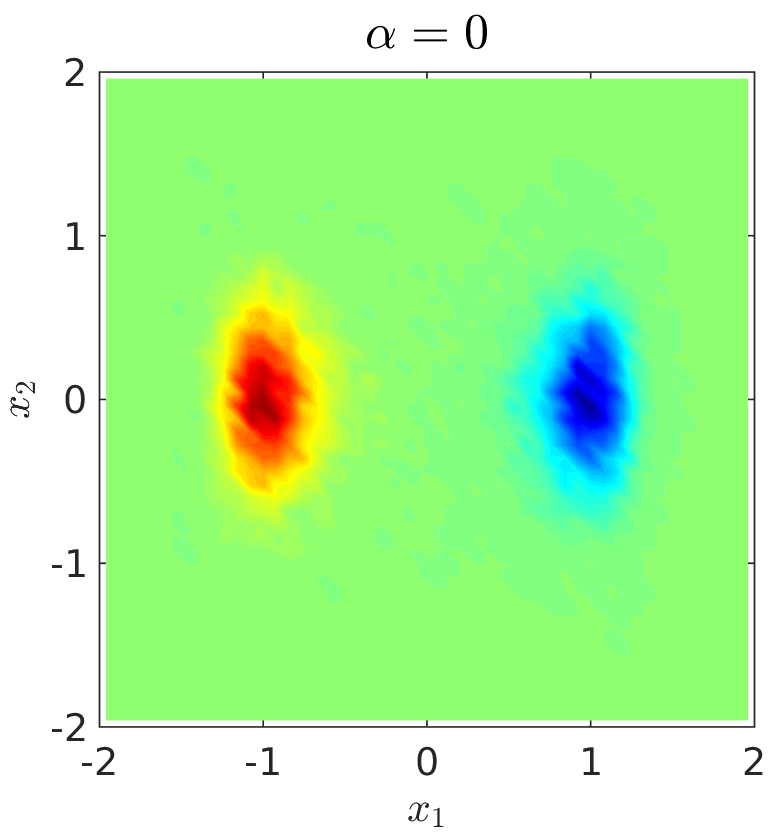}
    \includegraphics[width=\factori\textwidth]{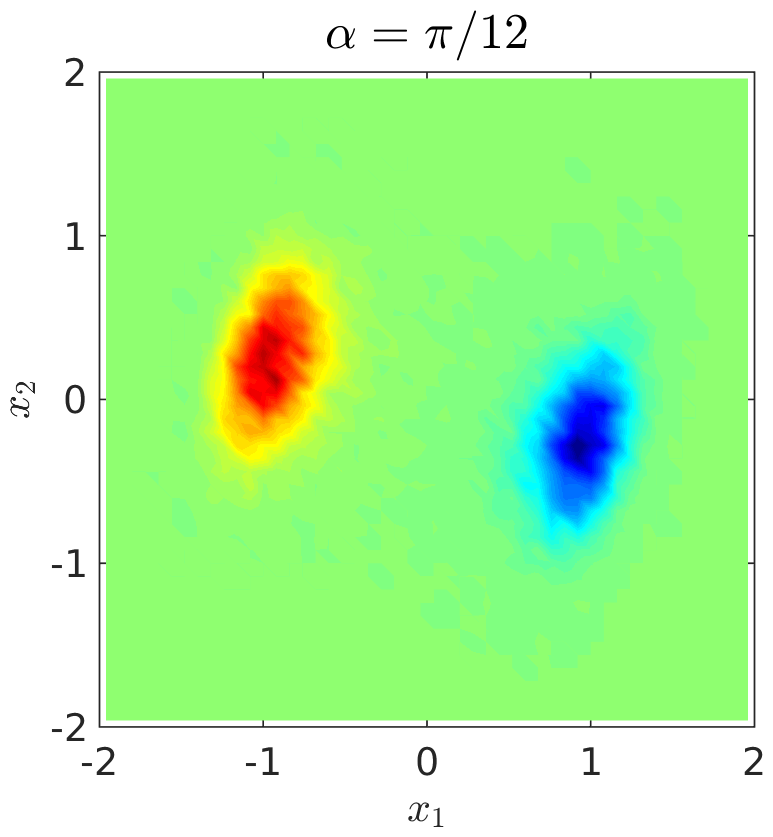}
    \includegraphics[width=\factori\textwidth]{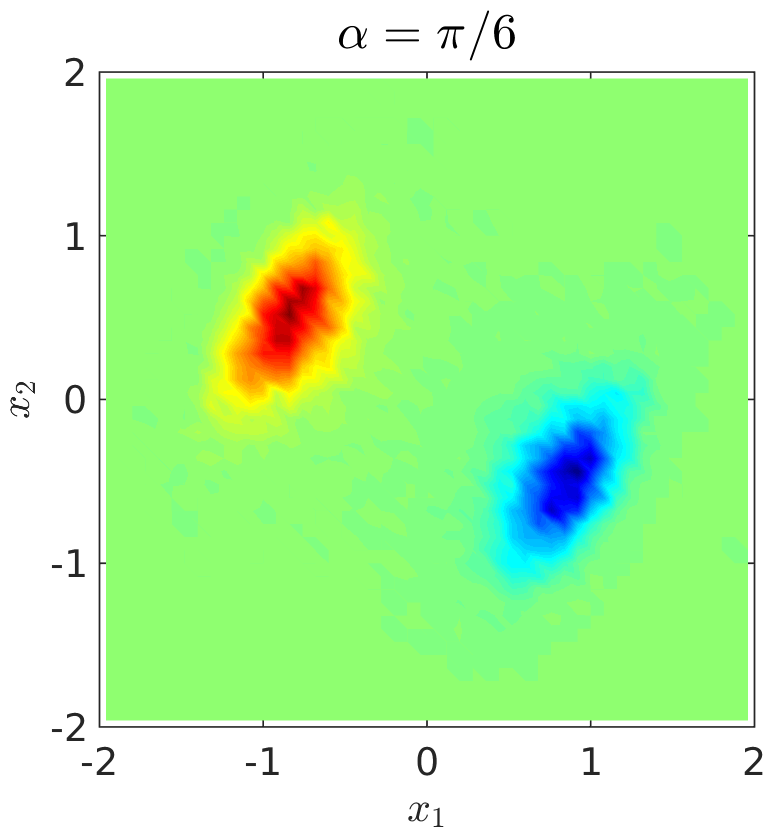}
    \includegraphics[width=\factori\textwidth]{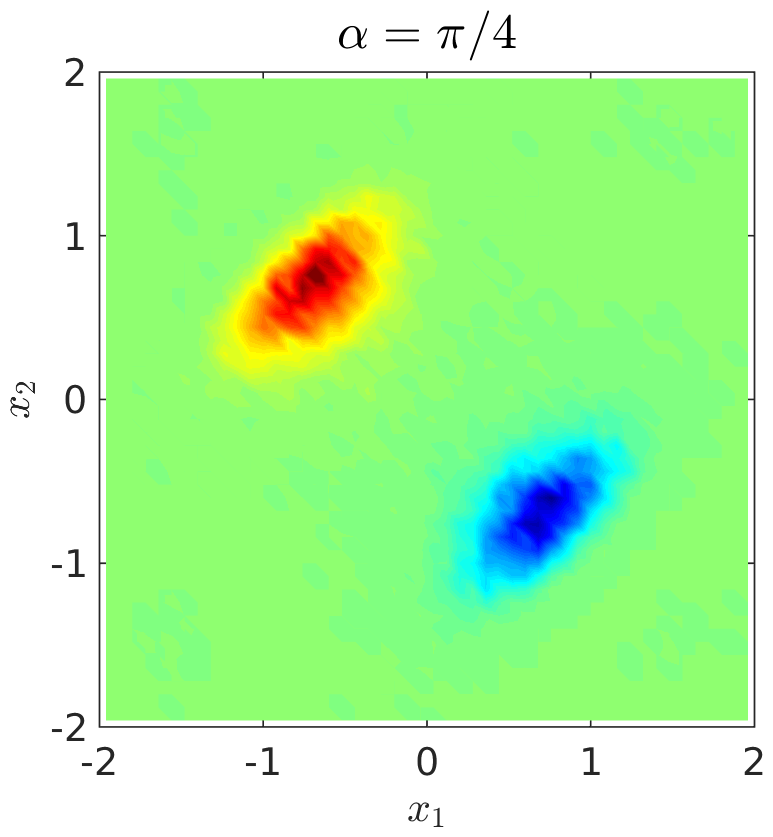}
    \includegraphics[width=\factorii\textwidth]{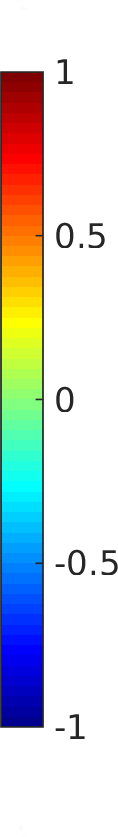}
    \caption{Second eigenfunction of the Perron--Frobenius operator for different values of $ \alpha $.}
    \label{fig:Double well P2}
\end{figure}

The second eigenfunctions of the Perron--Frobenius operator for the systems with potential $ \widetilde{V} $ and different values of $ \alpha $ computed using the tensor-based version of Ulam's method are shown in Figure~\ref{fig:Double well P2}. We chose $ \alpha = 0 $, $ \alpha = \pi/12 $, $ \alpha = \pi/6 $, and $ \alpha = \pi/4 $. The domain $ \mathcal{X} = [-2, 2]^2 $ was subdivided into $ 50 \times 50 $ equally sized boxes. That is, $ \mathbf{P} \in \R^{50 \times 50 \times 50 \times 50} $. For each box, 100 randomly chosen test points were generated. The Euler--Maruyama method with a step size $ h = 10^{-3} $ was used for the numerical integration, where one evaluation of $ S $ corresponds to $ 10,000 $ integration steps, that is, the integration interval is $ [0, 10] $. The shift parameter $ \theta $ of the power iteration method was set to a value slightly smaller than $ 1 $. Figure~\ref{fig:Double well P2r1248} illustrates how the tensor approximation, depending on the rank, successively picks up the information about the shape of the eigenfunction and generates more and more accurate representations. For $ \alpha = \pi/4 $, a tensor of rank $ 1 $ cannot represent the minimum and maximum simultaneously since this would lead to two additional peaks in the lower left and upper right corner. Here, the first pair of singular vectors represents the maximum, the second pair of singular vectors the minimum.
\begin{figure}[htb]
    \newcommand*{\factori}{0.2312}
    \newcommand*{\factorii}{0.0353}
    \centering
    \includegraphics[width=\factori\textwidth]{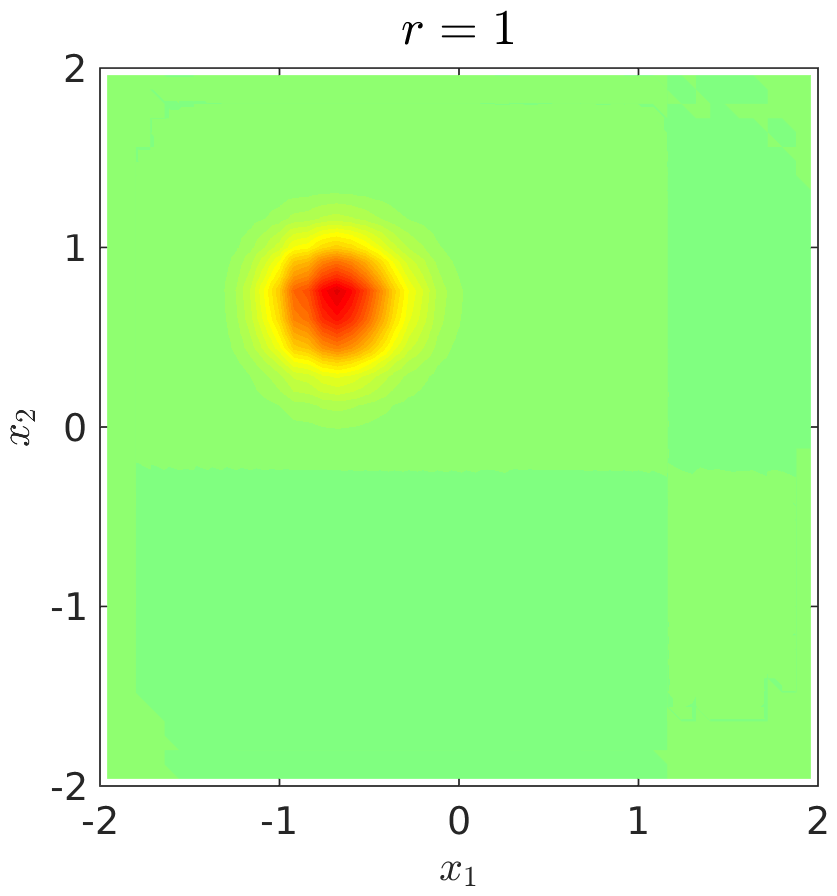}
    \includegraphics[width=\factori\textwidth]{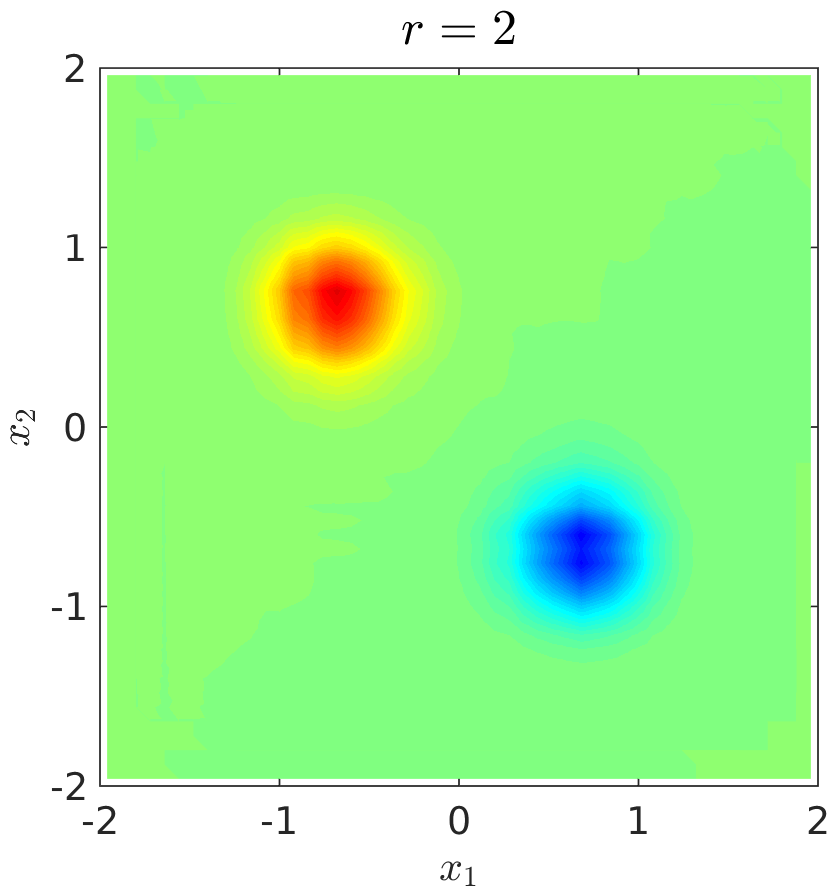}
    \includegraphics[width=\factori\textwidth]{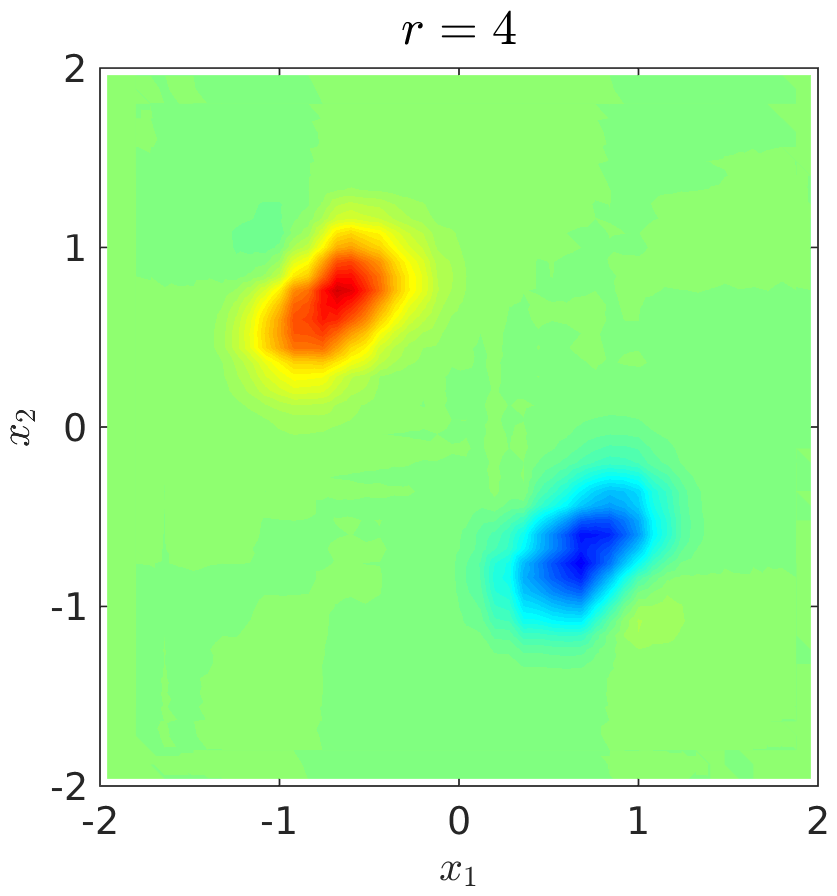}
    \includegraphics[width=\factori\textwidth]{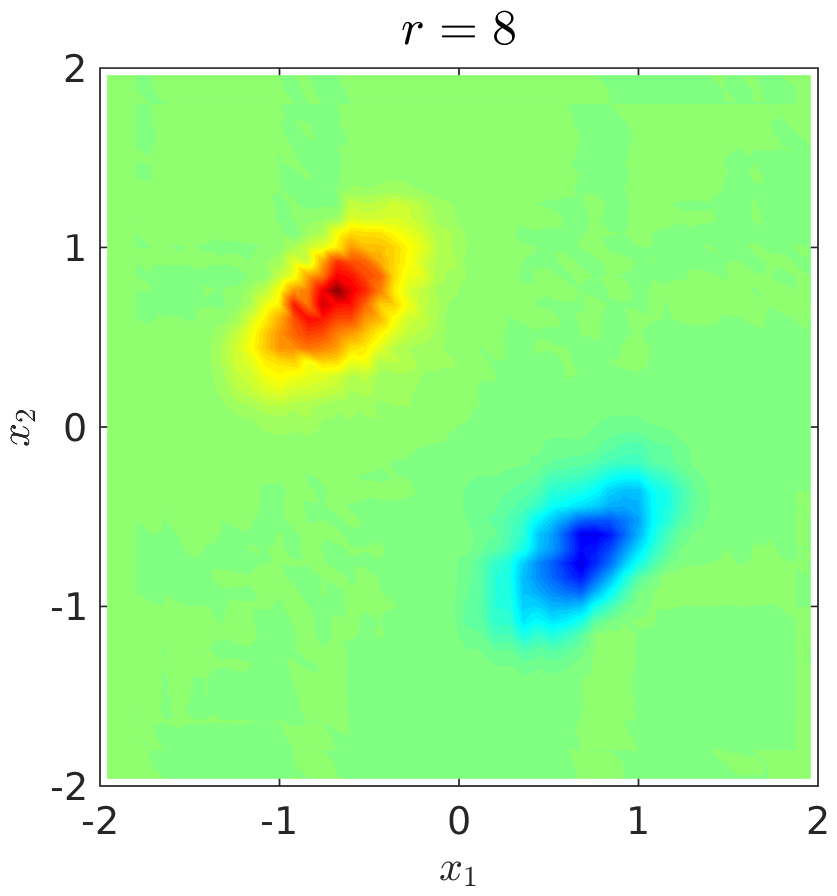}
    \includegraphics[width=\factorii\textwidth]{pics/ColorBar}
    \caption{Tensor approximations of the second eigenfunction of the Perron--Frobenius operator for $ \alpha = \pi/4 $ with increasing rank $ r $.}
    \label{fig:Double well P2r1248}
\end{figure}

Additionally, we computed the eigenfunctions with the standard version of Ulam's method to evaluate the accuracy of the approximation and compared it with the results obtained by using the new tensor-based formulation. Figure~\ref{fig:Double well truncation} shows the influence of the truncation of the operator as well as the influence of the truncation of the resulting eigenfunctions. Here, in order to analyze the accuracy, we also compare the first eigenfunction with the analytically computed invariant density. Since we are computing eigenfunctions of the Perron--Frobenius operator associated with a stochastic differential equation, the results depend strongly on the number of test points chosen for each box. The higher the number of test points per box, the smoother the eigenfunction approximation. Thus, in this case, the smoother low-rank solutions can counterintuitively lead to better approximations of the true eigenfunctions. The high ranks are mainly required to resolve the numerical noise introduced by the coarse approximation of the operator. This can be seen, for example, in Figure~\ref{fig:Double well truncation}b. Decreasing the rank initially reduces the error -- the truncation of the operator results in smoother eigenfunctions -- until the shape of the eigenfunction cannot be described by a low-rank approximation anymore and the error increases. For $ \alpha = 0 $, the $ x_1 $ and $ x_2 $ dynamics are independent and a low-rank approximation is sufficient. Furthermore, the results illustrate that for a fixed-rank approximation, the error is smaller when the system's dynamics are aligned with the axes.
\begin{figure}[htb]
    \centering
    \begin{minipage}[b]{0.49\textwidth}
        \centering
        \subfiguretitle{a)}
        \includegraphics[width=\textwidth]{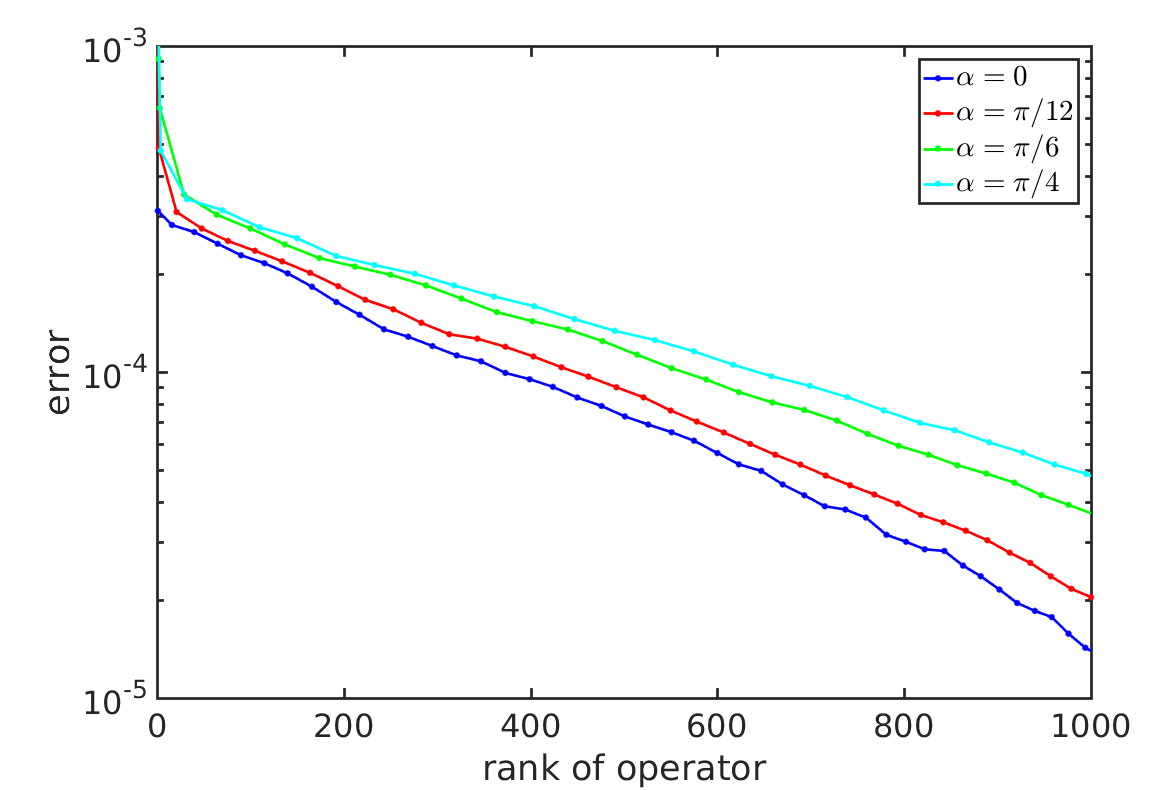}
    \end{minipage}
    \begin{minipage}[b]{0.49\textwidth}
        \centering
        \subfiguretitle{b)}
        \includegraphics[width=\textwidth]{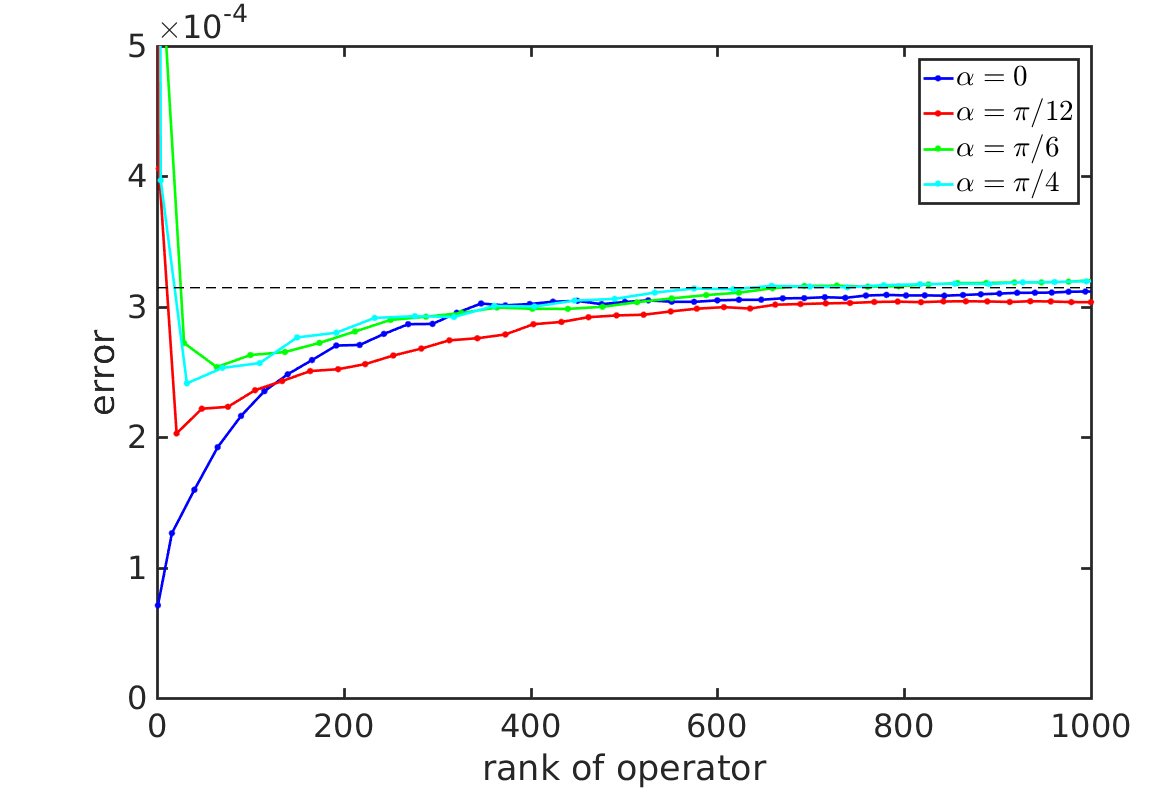}
    \end{minipage}
    \\[1ex]
    \begin{minipage}[b]{0.49\textwidth}
        \centering
        \subfiguretitle{c)}
        \includegraphics[width=\textwidth]{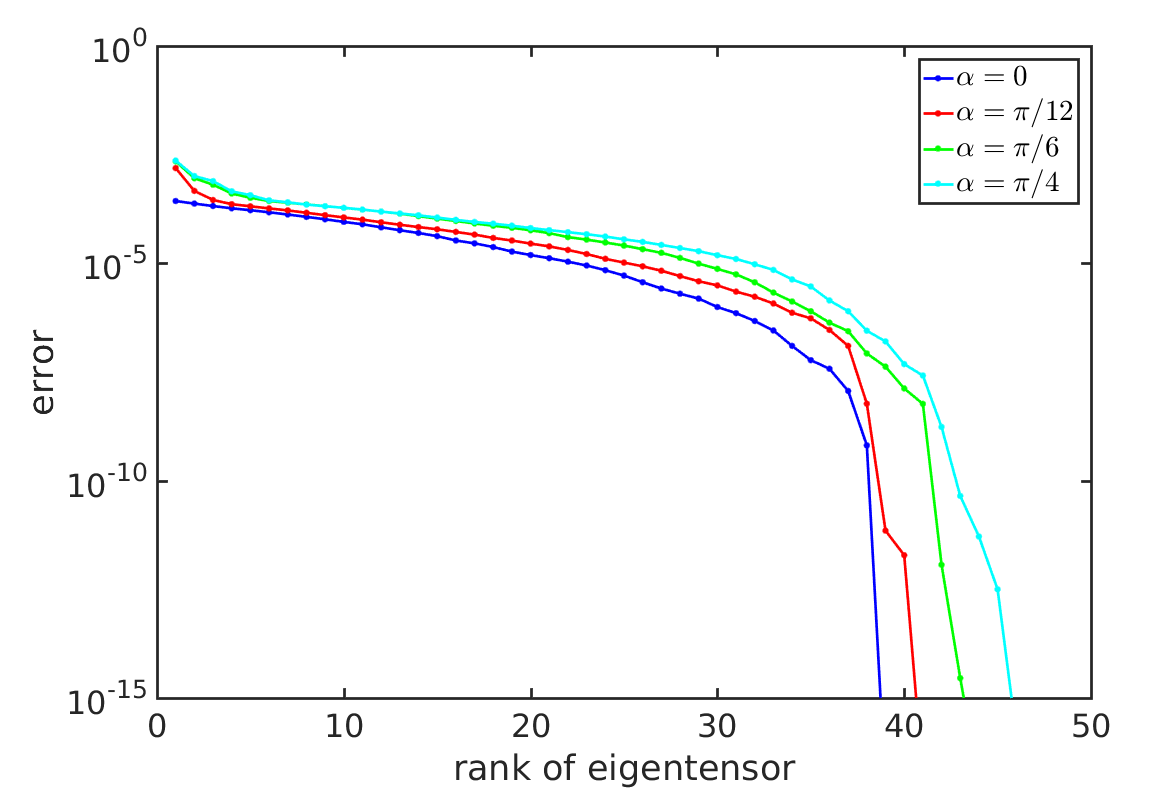}
    \end{minipage}
    \begin{minipage}[b]{0.49\textwidth}
        \centering
        \subfiguretitle{d)}
        \includegraphics[width=\textwidth]{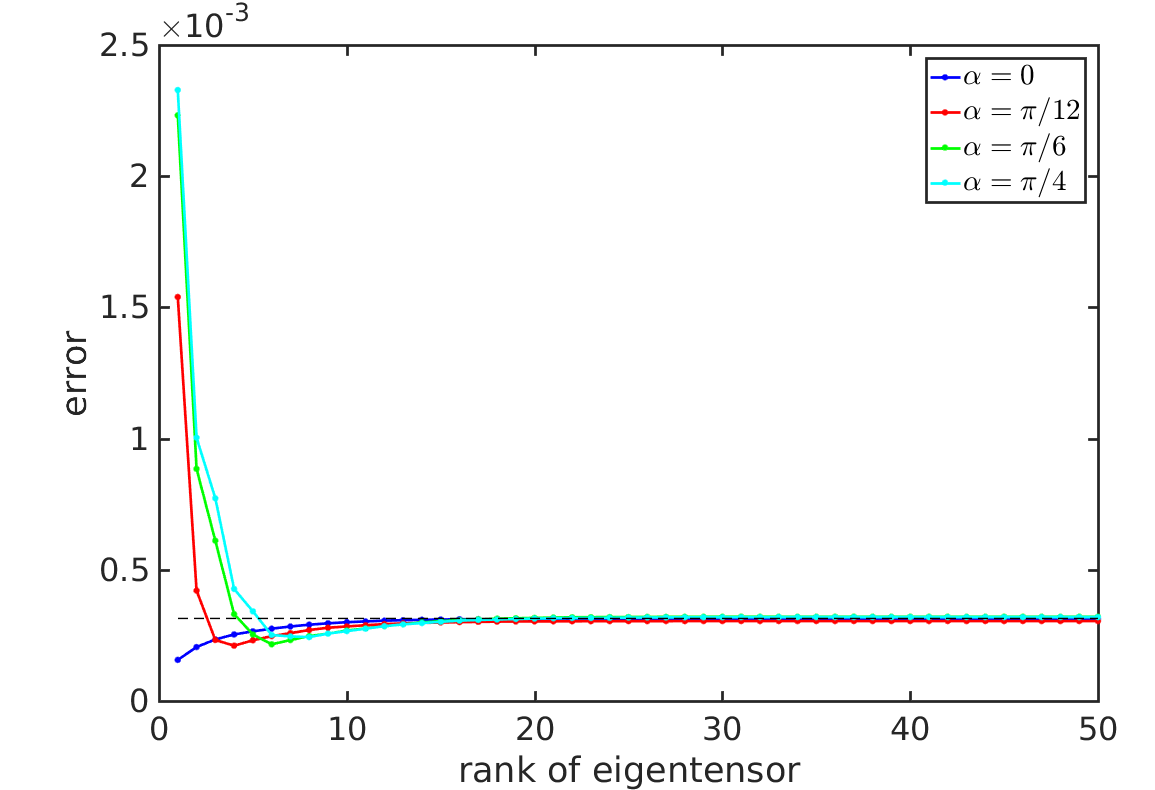}
    \end{minipage}
    \caption{(Top) Let $ \hat{v}_1 $ denote the first eigenvector of $ \hat{P} $, $ v_1 $ the (vectorized) eigentensor of the truncated tensor representation $ \mathbf{P} $, and $ \mu_\textsub{inv} $ the analytically computed invariant density. The error here is defined by $ e = \frac{1}{k} \norm{v_1 - \hat{v}_1}_2 $ and $ e = \frac{1}{k} \norm{v_1 - \mu_\textsub{inv}}_2 $, respectively, where $ k $ is the number of boxes. a) Difference between $ v_1 $ and $ \hat{v}_1 $ depending on the rank of $ \mathbf{P} $. b) Difference between $ v_1 $ and $ \mu_\textsub{inv} $. (Bottom) Influence of the truncation of the first eigentensor $ v_1 $ of the full tensor representation $ \mathbf{P} $ on the accuracy. c) Difference between the truncated eigentensor $ v_1 $ and $ \hat{v_1} $. d) Difference between the truncated eigentensor $ v_1 $ and $ \mu_\textsub{inv} $. The dashed lines show the error for the full-rank approximation which is almost identical for the different values of $ \alpha $.}
    \label{fig:Double well truncation}
\end{figure}

This example shows that in order to be able to approximate eigenfunctions by low-rank tensors, the dynamics of the system should be aligned with the axes chosen, although even if the dynamics are not aligned, the tensor format might be advantageous. In general, the dynamics are unknown a priori and not necessarily aligned with the axes, but for higher-dimensional systems it is often possible to decompose a system into slow and fast subsystems. Not all variables of a system might be equally important to describe the system's behavior. The intuition would be that certain subsystems require less information and that the tensor approximation automatically captures the relevant dynamics, using high ranks only when necessary.

\subsection{3-dimensional triple well problem}

Let us consider a more complex 3-dimensional example with the potential function
\begin{equation*}
    V(x_1, x_2, x_3) = 3 e^{-x_1^2 - (x_2 - \frac{1}{3})^2} - 3 e^{-x_1^2 - (x_2 - \frac{5}{3})^2}
                     - 5 e^{-(x_1-1)^2-x_2^2}             - 5 e^{-(x_1+1)^2 - x_2^2}
                     + \tfrac{2}{10} x_1^4 + \tfrac{2}{10} \left(x_2 - \tfrac{1}{3}\right)^4 
                    + x_3^2.
\end{equation*}
This is a potential taken from~\cite{SS13}, augmented by a third dimension. We subdivided the domain $ \mathcal{X} = [-2, 2] \times [-1, 2] \times [-2, 2] $ into $ 20 \times 20 \times 20 $ boxes of the same size. For each box, we randomly generated 1000 test points. The resulting finite-dimensional approximation is then a tensor $ \mathbf{P} \in \R^{20 \times 20 \times 20 \times 20 \times 20 \times 20} $. Figure~\ref{fig:Triple well P123} shows a scatter plot of the first three eigenfunctions, which were computed using the inverse power iteration described in Section~\ref{sec:Eigenvalue problems}. The first eigenfunction clearly shows the three expected regions with high probabilities corresponding to the minima of the potential $ V $. The second eigenfunction separates the two deeper wells at $ (-1, 0, 0 ) $ and $ (1, 0, 0) $, the third eigenfunction separates these wells from the third shallower well at $ (0, 1.5, 0) $. The differences between the eigenfunctions computed using the conventional and the tensor-based version of Ulam's method are negligible, the average difference between the first eigenvector $ v_1 $ and the tensor $ \mathbf{v}_1 $ is of the order of $ 10^{-6} $, which is mainly due to the less accurate power iteration method applied to the tensor eigenvalue problem.

\begin{figure}[htb]
    \newcommand*{\factor}{0.32}
    \centering
    \includegraphics[width=\factor\textwidth]{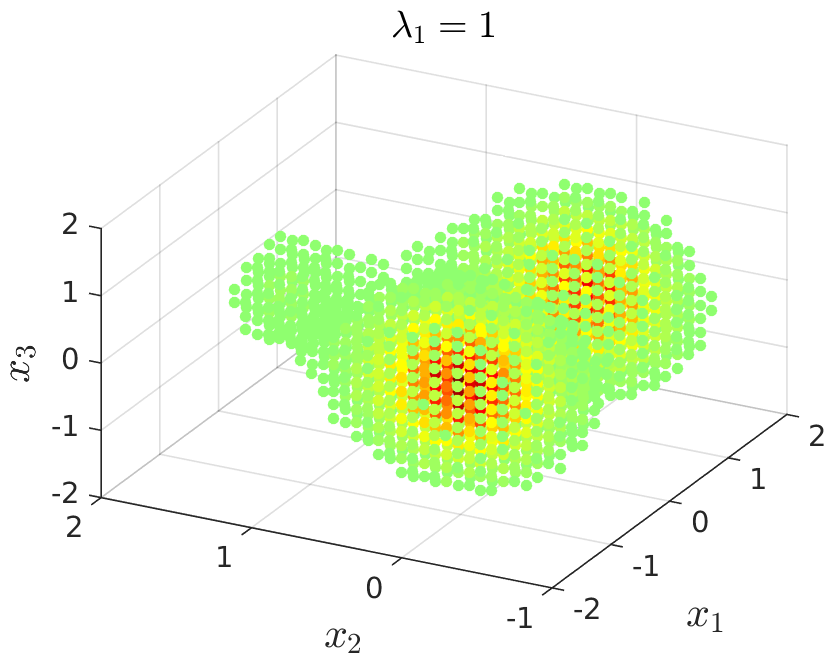}
    \includegraphics[width=\factor\textwidth]{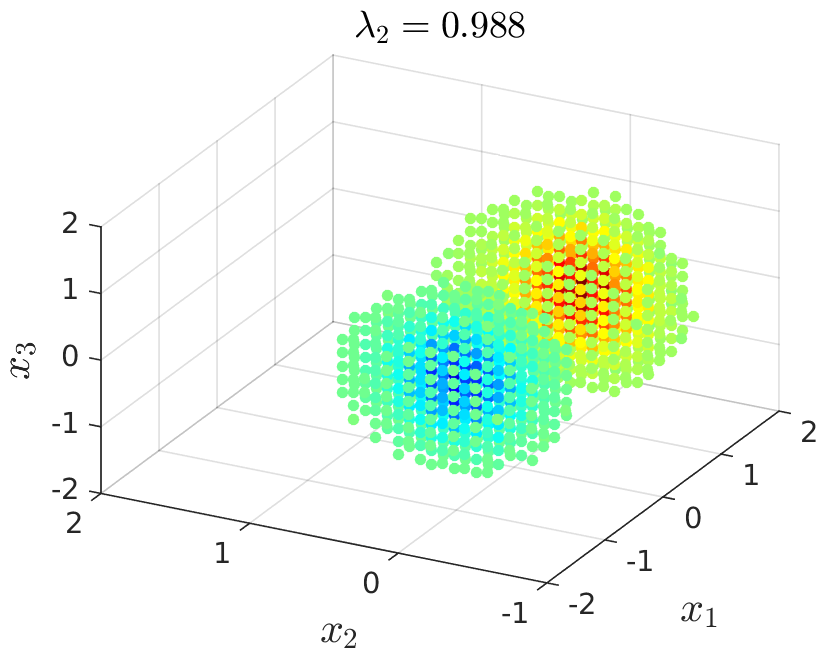}
    \includegraphics[width=\factor\textwidth]{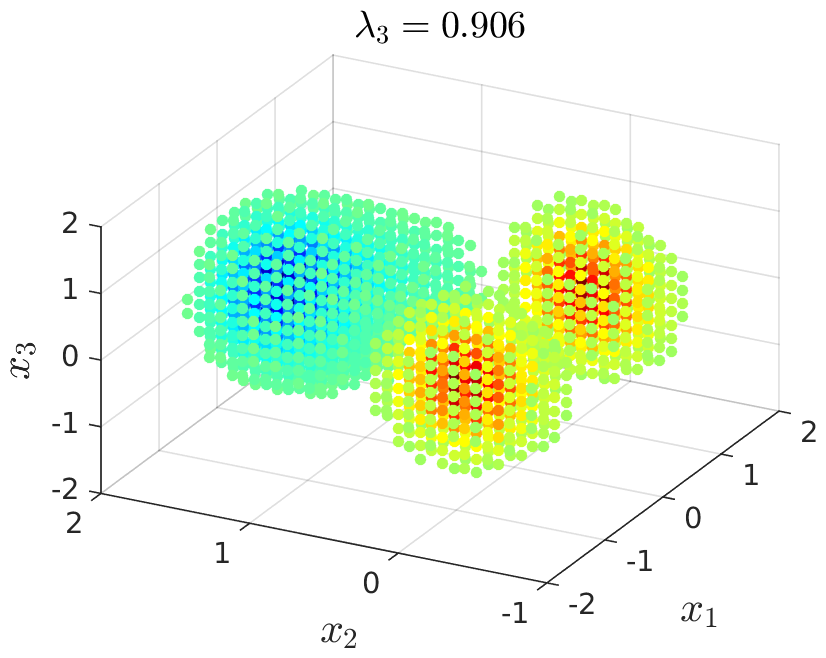}
    \caption{Scatter plot of the first three eigenfunctions of the Perron--Frobenius operator for the triple well system. Only entries whose absolute value is larger than a given threshold are plotted, entries close to zero are omitted.}
    \label{fig:Triple well P123}
\end{figure}

For the sake of comparison, we computed the eigenfunctions of the Koopman operator using EDMD. We chose basis functions $ \mathcal{D} = \{ x_1^{i_1} x_2^{i_2} x_3^{i_3}, \, i_1, i_2, i_3 = 0, \dots, 5 \} $. Hence, $ \mathbf{A}, \mathbf{G} \in \R^{6 \times 6 \times 6 \times 6 \times 6 \times 6} $. For higher-order monomials, the resulting matrices are ill-conditioned and the eigenfunctions cannot be computed accurately anymore. The results are shown in Figure~\ref{fig:Triple well K23}. As before, the second eigenfunction separates the two deeper wells. The third eigenfunction separates these wells from the third shallower well and is close to zero for the regions around the deep wells. The trivial eigenfunction of the Koopman operator corresponding to $ \lambda_1 = 1 $ is not plotted here since it is constant and does not contain relevant information about the system. Note that the eigenvalues are slightly different due to the different set of basis functions used for EDMD.

\begin{figure}[htb]
    \newcommand*{\factor}{0.32}
    \centering
    \includegraphics[width=\factor\textwidth]{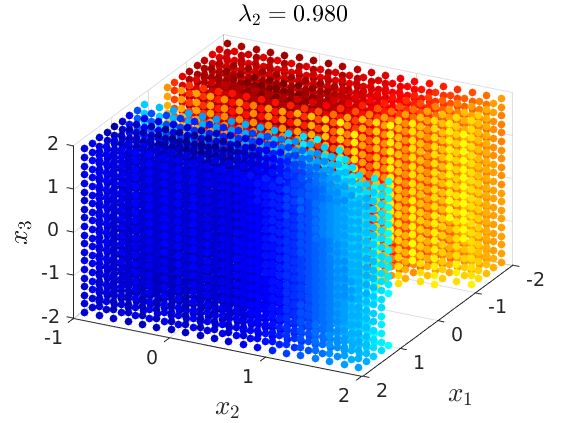}
    \includegraphics[width=\factor\textwidth]{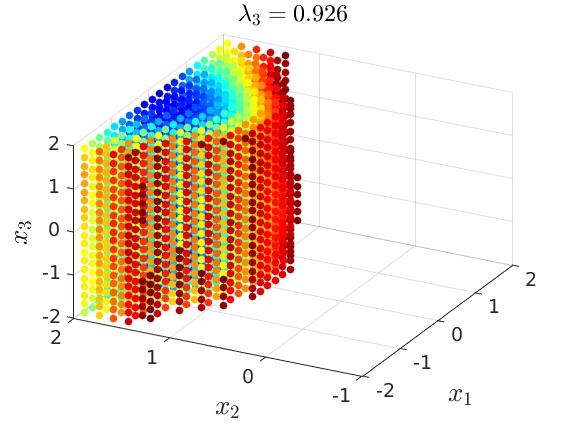}
    \caption{Scatter plot of the second and third eigenfunction of the Koopman operator for the triple well system. Note that compared to the other plots the second eigenfunction of the Koopman operator is rotated by 180 degrees around the $ x_3 $ axis for a better visualization.}
    \label{fig:Triple well K23}
\end{figure}

\section{Conclusion}
\label{sec:Conclusion}

We have reformulated the problems of computing finite-dimensional approximations of the Perron--Frobenius and Koopman operator in a different format using tensors instead of vectors. The matrices $ \mathbf{P} $ (if Ulam's method is used) or $ \mathbf{A} $ and $ \mathbf{G} $ (if EDMD is used) can now either be assembled in the dense tensor format or directly in the canonical tensor format -- which can then easily be converted into the TT format --, enabling low-rank approximations of the aforementioned operators. The next step is to systematically develop the numerical methods required to efficiently solve the resulting nonsymmetric generalized tensor eigenvalue problems and also to store and handle these tensors minimizing memory requirements so that even high-dimensional problems can be solved. First results obtained by applying simple algorithms such as power iteration methods are promising and show that the approaches presented within this paper might be able to tackle high-dimensional problems. Currently, several toolboxes for tensor-based problems are under development. Once these toolboxes contain methods for solving nonsymmetric generalized eigenvalue problems, the proposed approaches can be implemented easily, potentially facilitating the computation of meta-stable sets or almost invariant sets of dynamical systems that could not be handled before due to the curse of dimensionality. We demonstrated the tensor-based version of Ulam's method and EDMD using two- and three-dimensional problems mainly because the results can be easily validated and visualized.

Future work also includes determining which tensor format is suited best for our purposes. Currently, one of the main bottlenecks is the simulation required to obtain the data. Even simple molecular dynamics simulations, for instance, might easily take several days, but in order to capture the behavior of the system, long trajectories or a large number of short simulations with different initial conditions are required. This huge amount of data must then be processed. Thus, also the construction of the matrices $ \mathbf{P} $ or $ \mathbf{A} $ and $ \mathbf{G} $ is time-consuming. The number of boxes or basis functions required to represent each variable of the system accurately is in general unknown a priori. If we are, for instance, only interested in the leading eigenfunctions of the Koopman operator, the fast variables of the system are typically almost constant and require less information to be captured. Starting with a set of only a few basis functions for each unknown, which is then, if needed, augmented adaptively based on the system's behavior would greatly improve the efficiency. Adaptive methods combined with (sparse) tensor approaches might be able to tackle high-dimensional systems and diminish the curse of dimensionality. Furthermore, a detailed numerical analysis of the efficiency and accuracy of the proposed algorithms would help understand the limitations and find opportunities for improvement.

Another open problem is the -- depending on the number of dimensions $ d $ -- typically extremely large condition number of the matrices $ \mathbf{A} $ or $ \mathbf{G} $ if EDMD with, for instance, monomials are used to compute the eigenfunctions of the Perron--Frobenius or Koopman operator. Hence, the resulting eigenvalue problems cannot be solved accurately anymore for high-dimensional systems. A detailed understanding and numerical analysis of different basis functions might help mitigate this problem. Radial-basis functions or other more locally defined functions could lead to better results.

\section*{Acknowledgements}

This research has been partially funded by Deutsche Forschungsgemeinschaft (DFG) through grant CRC 1114. Moreover, we would like to thank the reviewers for their helpful comments and suggestions.

\bibliographystyle{unsrt}
\bibliography{TPFK}

\end{document}